\newtheorem{theorem}{Theorem}
\theoremstyle{plain}
\newtheorem{remark}{Remark}
\numberwithin{equation}{section}
\begin{document}
\title[More on Jensen functional and convexity ]{More on Jensen functional
and convexity }
\author{Shoshana Abramovich}
\address{Department of Mathematics, University of Haifa, Haifa, Israel}
\email{abramos@math.haifa.ac.il }
\date{May 22, 2024}
\subjclass{26D15, 26A51, 39B62, 47A63, 47A64}
\keywords{ Jensen Functionals, Convexity, Uniform Convexity}

\begin{abstract}
In this paper we prove\ results on the difference between a normalized
Jensen functional and the sum of other normalized Jensen functionals for
convex function.
\end{abstract}

\maketitle

\section{\protect\bigskip Introduction}

In this paper we prove\ results on the difference between a normalized
Jensen functional and the sum of other normalized Jensen functionals for
convex function.

Jensen functional is:

\begin{equation*}
J_{n}\left( f,\mathbf{x},\mathbf{p}\right) =\sum_{i=1}^{n}p_{i}f\left(
x_{i}\right) -f\left( \sum_{i=1}^{n}p_{i}x_{i}\right) .
\end{equation*}

We start with the theorem of S. S. Dragomir:

\begin{theorem}
\label{Th1} \cite{D} \textit{Consider the normalized Jensen functional where 
}$f:C\longrightarrow 
\mathbb{R}
$\textit{\ is a convex function on the convex set }$C$ in a real linear
space,\textit{\ }$\mathbf{x}=\left( x_{1},...,x_{n}\right) \in C^{n},$ and%
\textit{\ \ }$\mathbf{p}=\left( p_{1},...,p_{n}\right) ,$\textit{\ \ }$%
\mathbf{q}=\left( q_{1},...,q_{n}\right) $\textit{\ are non-negative
n-tuples satisfying }$\sum_{i=1}^{n}p_{i}=1,$\textit{\ \ }$%
\sum_{i=1}^{n}q_{i}=1,$\textit{\ \ }$p_{i},$\textit{\ }$q_{i}>0,$\textit{\ \ 
}$i=1,...,n$\textit{. Then } 
\begin{equation*}
MJ_{n}\left( f,\mathbf{x},\mathbf{q}\right) \geq J_{n}\left( f,\mathbf{x},%
\mathbf{p}\right) \geq mJ_{n}\left( f,\mathbf{x},\mathbf{q}\right) ,\qquad
\end{equation*}%
provided that 
\begin{equation*}
m=\min_{1\leq i\leq n}\left( \frac{p_{i}}{q_{i}}\right) ,\quad M=\max_{1\leq
i\leq n}\left( \frac{p_{i}}{q_{i}}\right) .
\end{equation*}
\end{theorem}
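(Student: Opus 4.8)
The plan is to prove the two inequalities by exhibiting the desired multiple of $J_{n}(f,\mathbf{x},\mathbf{q})$ as an explicit convex-combination estimate of $J_{n}(f,\mathbf{x},\mathbf{p})$, using only the convexity of $f$ together with the non-negativity of the Jensen functional for probability weights (which is itself Jensen's inequality). The whole argument rests on one algebraic observation: since $\sum_{i}p_{i}=\sum_{i}q_{i}=1$ and the $q_{i}$ are positive, the quantity $\sum_{i}q_{i}(p_{i}/q_{i})=\sum_{i}p_{i}=1$ is a convex combination (with weights $q_{i}$) of the ratios $p_{i}/q_{i}$, hence lies between their extremes: $m\leq 1\leq M$. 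This makes both $1-m$ and $M-1$ non-negative, which is exactly what is needed below.

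For the lower bound I would write $p_{i}=mq_{i}+r_{i}$ with $r_{i}:=p_{i}-mq_{i}\geq 0$ (non-negative by the definition of $m$), and note $\sum_{i}r_{i}=1-m$. If $m=1$ this forces $r_{i}\equiv 0$, hence $\mathbf{p}=\mathbf{q}$ and the inequality is an equality; otherwise set $s_{i}:=r_{i}/(1-m)$, so that $\mathbf{s}=(s_{1},\dots,s_{n})$ is a probability vector and $p_{i}=mq_{i}+(1-m)s_{i}$. Applying convexity to the barycenter,
\begin{equation*}
f\Big(\sum_{i}p_{i}x_{i}\Big)=f\Big(m\sum_{i}q_{i}x_{i}+(1-m)\sum_{i}s_{i}x_{i}\Big)\leq m\,f\Big(\sum_{i}q_{i}x_{i}\Big)+(1-m)\,f\Big(\sum_{i}s_{i}x_{i}\Big),
\end{equation*}
and combining this with the linearity $\sum_{i}p_{i}f(x_{i})=m\sum_{i}q_{i}f(x_{i})+(1-m)\sum_{i}s_{i}f(x_{i})$ gives
\begin{equation*}
J_{n}(f,\mathbf{x},\mathbf{p})\geq m\,J_{n}(f,\mathbf{x},\mathbf{q})+(1-m)\,J_{n}(f,\mathbf{x},\mathbf{s}).
\end{equation*}
Since $1-m\geq 0$ and $J_{n}(f,\mathbf{x},\mathbf{s})\geq 0$ by Jensen's inequality, the last term is non-negative and the lower bound follows.

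For the upper bound I would dualize the decomposition: set $t_{i}:=Mq_{i}-p_{i}\geq 0$ (non-negative by the definition of $M$), with $\sum_{i}t_{i}=M-1$. Again $M=1$ forces $\mathbf{p}=\mathbf{q}$ and equality; otherwise put $u_{i}:=t_{i}/(M-1)$, so that $\mathbf{u}$ is a probability vector and $Mq_{i}=p_{i}+(M-1)u_{i}$, equivalently $q_{i}=\tfrac{1}{M}p_{i}+\tfrac{M-1}{M}u_{i}$. Convexity applied at $\sum_{i}q_{i}x_{i}$ now yields $M\,f\big(\sum_{i}q_{i}x_{i}\big)\leq f\big(\sum_{i}p_{i}x_{i}\big)+(M-1)f\big(\sum_{i}u_{i}x_{i}\big)$, and after substituting $M\sum_{i}q_{i}f(x_{i})=\sum_{i}p_{i}f(x_{i})+(M-1)\sum_{i}u_{i}f(x_{i})$ one obtains
\begin{equation*}
M\,J_{n}(f,\mathbf{x},\mathbf{q})\geq J_{n}(f,\mathbf{x},\mathbf{p})+(M-1)\,J_{n}(f,\mathbf{x},\mathbf{u}).
\end{equation*}
Dropping the non-negative term $(M-1)J_{n}(f,\mathbf{x},\mathbf{u})\geq 0$ gives the upper bound.

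I do not expect a serious obstacle here: the only thing to get right is the bookkeeping of the two decompositions and the verification that $m\leq 1\leq M$, which guarantees that the auxiliary weights $\mathbf{s}$ and $\mathbf{u}$ are genuine probability vectors and that the coefficients $1-m$ and $M-1$ have the correct sign. The conceptual content is carried entirely by a single application of convexity for each bound, with the remainder reduced to Jensen's inequality for the auxiliary distributions.
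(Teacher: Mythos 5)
Your argument is correct, and it is essentially the same convexity technique that the paper (which cites Theorem \ref{Th1} from Dragomir without reproving it) uses for its generalizations in Theorems \ref{Th8} and \ref{Th9}: there the nonnegative weights $p_{i}-mq_{i}$ together with $m$ (resp.\ $q_{i}-p_{i}/M$ together with $1/M$) are fed into a single application of Jensen's inequality at the barycenter $\sum_{i}p_{i}x_{i}$ (resp.\ $\sum_{i}q_{i}x_{i}$), which is exactly your decomposition $p_{i}=mq_{i}+(1-m)s_{i}$ (resp.\ $q_{i}=\tfrac{1}{M}p_{i}+\tfrac{M-1}{M}u_{i}$) with the auxiliary Jensen functional split off as a separate step. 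Your version in fact records the slightly sharper identities with the remainder terms $(1-m)J_{n}(f,\mathbf{x},\mathbf{s})$ and $(M-1)J_{n}(f,\mathbf{x},\mathbf{u})$ before discarding them, but the underlying idea is the same.
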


This important theorem was extended in \cite{A1}, \cite{S}, \cite{SDB} and 
\cite{TTD} using a similar technique as we use here.

\bigskip

In Theorem \ref{Th2} the right handside of the inequality in Theorem \ref%
{Th1} is extended.

In Theorem \ref{Th2} the following conditions and notations are used:

$0\leq p_{i,1}\leq 1$, $0<q_{i}\leq 1,\
\sum_{i=1}^{n}p_{i,1}=\sum_{i=1}^{n}q_{i}=1$.

Denote $m_{1}=\underset{i=1,...,n}{\min }\left( \frac{p_{i,1}}{q_{i}}\right) 
$, and $s_{1}$ the number of $i$-th for which $m_{1}$ occur.

Define recoursively%
\begin{eqnarray*}
p_{i,k} &=&\left\{ 
\begin{array}{cc}
p_{i,k-1}-m_{k-1}q_{i}, & m_{k-1}\neq \frac{p_{i,k-1}}{q_{i,}} \\ 
\frac{1}{s_{k-1}}m_{k-1}, & m_{k-1}=\frac{p_{i,k-1}}{q_{i}}%
\end{array}%
\right. , \\
m_{k-1} &=&%
\begin{array}{cc}
\underset{1\leq i\leq n}{\min }\left( \frac{p_{i,k-1}}{q_{i}}\right) , & 
k=2,...,N%
\end{array}%
\qquad \qquad
\end{eqnarray*}

\bigskip\ and denote $s_{k-1}$ as the number of cases for which $m_{k-1}$
occurs.

Let also $\mathbf{x}_{1}=\left( x_{1,1},x_{2,1},...,x_{n,1}\right) \in
\left( a,b\right) ^{n}$, and define recoursively 
\begin{equation*}
x_{i,k}=\left\{ 
\begin{array}{cc}
x_{i,k-1\hspace{0in}}, & m_{k-1}\neq \frac{p_{i,k-1}}{q_{i}} \\ 
\sum_{i=1}^{n}q_{i}x_{i,k-1}, & m_{k-1}=\frac{p_{i,k-1}}{q_{i}}%
\end{array}%
\right. ,
\end{equation*}%
$i=1,...n$, $k=2,...,N$.

\begin{theorem}
\label{Th2} \cite[Theorem 5]{A1} Suppose that $f:\left[ a,b\right)
\rightarrow 
\mathbb{R}
,$ $a<b\leq \infty $ is a convex function. Then, for every integer $N,$ 
\begin{equation}
J_{n}\left( f,\mathbf{x}_{1},\mathbf{p}_{1}\right)
-\sum_{k=1}^{N}m_{k}J_{n}\left( f,\mathbf{x}_{k},\mathbf{q}\right) \geq 0,
\label{1.1}
\end{equation}%
where $m_{k}$ and $\mathbf{x}_{k}=$\ $\left( x_{1,k},...,x_{n,k}\right) ,$ $%
k=1,...,N$ are as defined above, and $\mathbf{p}_{1}=\left(
p_{1,1},...,p_{n,1}\right) ,$\ $\mathbf{q=}\left( q_{1},...,q_{n}\right) 
\mathbf{,}$ and $\sum_{i=1}^{n}p_{i,1}=\sum_{i=1}^{n}q_{i}=1,$ and $%
p_{i,1}\geq 0,$\ $q_{i}>0,$ $\ \ i=1,...,n$, $m_{1}=\underset{1\leq i\leq n}{%
\min }\left( \frac{p_{i,1}}{q_{i}}\right) .$
\end{theorem}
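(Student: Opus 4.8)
The plan is to show that the recursion is engineered so that a single step splits off one term $m_{k}J_{n}\left( f,\mathbf{x}_{k},\mathbf{q}\right) $ \emph{exactly}, and then to telescope. Setting $m_{k}=\min_{1\leq i\leq n}\left( p_{i,k}/q_{i}\right) $ and continuing the defining recursion one step past $N$ (so that $\mathbf{p}_{N+1}$ and $\mathbf{x}_{N+1}$ are available), I would first establish, for each $k$, the identity
\begin{equation*}
J_{n}\left( f,\mathbf{x}_{k},\mathbf{p}_{k}\right) =m_{k}J_{n}\left( f,\mathbf{x}_{k},\mathbf{q}\right) +J_{n}\left( f,\mathbf{x}_{k+1},\mathbf{p}_{k+1}\right) .
\end{equation*}
Summing this over $k=1,\dots ,N$ makes the middle argument telescope, giving
\begin{equation*}
J_{n}\left( f,\mathbf{x}_{1},\mathbf{p}_{1}\right) -\sum_{k=1}^{N}m_{k}J_{n}\left( f,\mathbf{x}_{k},\mathbf{q}\right) =J_{n}\left( f,\mathbf{x}_{N+1},\mathbf{p}_{N+1}\right) ,
\end{equation*}
so that (\ref{1.1}) is in fact an \emph{equality} whose right-hand side is a single Jensen functional, and the whole claim reduces to the nonnegativity of that functional.

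The core of the argument is two invariants of the recursion, both proved by induction on $k$. Write $I_{k}$ for the set of indices attaining the minimum $m_{k}$, so $\left\vert I_{k}\right\vert =s_{k}$ and $p_{i,k}=m_{k}q_{i}$ for $i\in I_{k}$, while $p_{i,k}-m_{k}q_{i}\geq 0$ for every $i$ by minimality of $m_{k}$; in particular each $\mathbf{p}_{k}$ is again a nonnegative $n$-tuple. The first invariant, $\sum_{i}p_{i,k}=1$, holds because in step $k+1$ the indices of $I_{k}$ contribute total weight $\sum_{i\in I_{k}}m_{k}/s_{k}=m_{k}$, exactly compensating the $m_{k}\sum_{i}q_{i}=m_{k}$ subtracted from the other weights. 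The second, decisive, invariant is that the barycenter is preserved,
\begin{equation*}
\sum_{i}p_{i,k+1}x_{i,k+1}=\sum_{i}p_{i,k}x_{i,k}.
\end{equation*}
Indeed, since $p_{i,k}-m_{k}q_{i}=0$ on $I_{k}$, writing $\bar{x}_{k}=\sum_{i}q_{i}x_{i,k}$ one has $\sum_{i\notin I_{k}}\left( p_{i,k}-m_{k}q_{i}\right) x_{i,k}=\sum_{i}p_{i,k}x_{i,k}-m_{k}\bar{x}_{k}$, while the relocated indices contribute $\sum_{i\in I_{k}}\left( m_{k}/s_{k}\right) \bar{x}_{k}=m_{k}\bar{x}_{k}$, and the two copies of $m_{k}\bar{x}_{k}$ cancel. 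Consequently the two functionals $J_{n}\left( f,\mathbf{x}_{k},\mathbf{p}_{k}\right) $ and $J_{n}\left( f,\mathbf{x}_{k+1},\mathbf{p}_{k+1}\right) $ contain the \emph{same} value $f\left( \sum_{i}p_{i,k}x_{i,k}\right) $, so their difference is just a difference of weighted sums of $f$-values, and a direct computation gives
\begin{equation*}
\sum_{i}p_{i,k}f\left( x_{i,k}\right) -\sum_{i}p_{i,k+1}f\left( x_{i,k+1}\right) =m_{k}\sum_{i}q_{i}f\left( x_{i,k}\right) -m_{k}f\left( \bar{x}_{k}\right) =m_{k}J_{n}\left( f,\mathbf{x}_{k},\mathbf{q}\right) ,
\end{equation*}
which is precisely the one-step identity.

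Finally, the telescoped remainder $J_{n}\left( f,\mathbf{x}_{N+1},\mathbf{p}_{N+1}\right) $ is nonnegative: by the invariants $\mathbf{p}_{N+1}$ is a nonnegative $n$-tuple summing to $1$, and every node $x_{i,N+1}$ lies in $\left[ a,b\right) $ since it is either an old node or one of the convex combinations $\bar{x}_{k}$, so convexity of $f$ gives $\sum_{i}p_{i,N+1}f\left( x_{i,N+1}\right) \geq f\left( \sum_{i}p_{i,N+1}x_{i,N+1}\right) $, that is $J_{n}\left( f,\mathbf{x}_{N+1},\mathbf{p}_{N+1}\right) \geq 0$. Combined with the two displayed identities, this is exactly (\ref{1.1}). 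I expect the only genuine subtlety to be the barycenter-preservation invariant: it is the reason the recursion places weight $m_{k}/s_{k}$ at the point $\bar{x}_{k}$ on the minimizing indices instead of simply deleting them, and it is what upgrades the single-step Dragomir-type estimate (an inequality obtained from two applications of convexity) to an exact identity, so that convexity is spent only once, at the very last step.
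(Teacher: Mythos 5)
Your proposal is correct and follows essentially the same route as the paper: the paper (which defers the proof of Theorem \ref{Th2} to \cite{A1} but displays the identical technique in its proof of Theorem \ref{Th8}) likewise shows that each step converts $\sum_i p_{i,k}f(x_{i,k})-m_kJ_n(f,\mathbf{x}_k,\mathbf{q})$ exactly into $\sum_i p_{i,k+1}f(x_{i,k+1})$, verifies the invariants $\sum_i p_{i,k}=1$, $p_{i,k}\geq 0$, $\sum_i p_{i,k}x_{i,k}=\sum_i p_{i,1}x_{i,1}$, and applies convexity once at the final stage. The only difference is presentational: you telescope the Jensen functionals themselves (using the barycenter invariant to cancel the $f(\sum_i p_{i,k}x_{i,k})$ terms), while the paper telescopes the weighted sums of $f$-values and subtracts $f\left(\sum_i p_{i,1}x_{i,1}\right)$ only at the end.
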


\bigskip In Theorem \ref{Th3} the left hand-side inequality in Theorem \ref%
{Th1} is extended.

Denote 
\begin{eqnarray*}
\mathbf{p}_{1} &=&\left( p_{1,1},...,p_{1,n}\right) ,\quad \mathbf{q=}\left(
q_{1},...,q_{n}\right) , \\
\quad \mathbf{x}_{k} &=&\left( x_{1,k},...,x_{n,k}\right) ,\quad k=1,...,N \\
p_{i,1} &\geq &0,\quad q_{i}>0,\quad i=1,...,n,\quad
\sum_{i=1}^{n}p_{i,1}=\sum_{i=1}^{n}q_{i}=1, \\
M_{1} &=&\underset{1\leq i\leq n}{Max}\left( \frac{p_{i,1}}{q_{i}}\right) =%
\frac{p_{j_{1},1}}{q_{j_{1}}},
\end{eqnarray*}%
where $j_{1}$ is a fixed specific integer for which $M_{1}$\ holds.

Define recoursively 
\begin{eqnarray*}
p_{i,k} &=&p_{i,k-1}-M_{k-1}q_{i},\ x_{i,k}=x_{i,k-1},\ \text{when}\
M_{k-1}\neq \frac{p_{i,k-1}}{q_{i}},\ k=2,...,N \\
p_{i,k} &=&p_{i,k-1}-M_{k-1}q_{i},\ x_{i,k}=x_{i,k-1},\ \text{when}\ M_{k-1}=%
\frac{p_{i,k-1}}{q_{i}},\ i\neq j_{k},\ k=2,...,N \\
p_{j_{k},k} &=&M_{k-1},\ x_{j_{k},k}=\sum_{i=1}^{n}q_{i}x_{i,k-1},\ \text{%
when}\ M_{k-1}=\frac{p_{j_{k},k-1}}{q_{j_{k-1}}}, \\
M_{k} &=&\underset{1\leq i\leq n}{Max}\left( \frac{p_{i,k}}{q_{i}}\right) =%
\frac{p_{j_{k},k}}{q_{j_{k}}},\qquad k=1,...,N,
\end{eqnarray*}%
where $j_{k}$ is a specific index for which $M_{k}$ holds.

With the notations and conditions above the following is obtained:

\begin{theorem}
\bigskip \label{Th3} Let $f:\left[ a,b\right) \rightarrow 
\mathbb{R}
,$ $a\leq b\leq \infty ,$ be a convex function, and let the notations and
conditions above hold. Then, for every integer $N$%
\begin{equation*}
J_{n}\left( f,\mathbf{x}_{1},\mathbf{p}_{1}\right)
-\sum_{k=1}^{N}M_{k}J_{n}\left( f,\mathbf{x}_{k},\mathbf{q}\right) \leq 0,
\end{equation*}%
and 
\begin{equation*}
M_{k}=\frac{p_{j_{1},1}}{q_{j_{1}}^{k}},\qquad k=1,...,N
\end{equation*}%
hold, where $j_{1}$ is a fixed specific integer for which $M_{1}=\underset{%
1\leq i\leq n}{\max }\left( \frac{p_{i,1}}{q_{i}}\right) =\frac{p_{j_{1},1}}{%
q_{j_{1}}}$ is satisfied.
\end{theorem}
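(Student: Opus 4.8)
The plan is to peel off the terms $M_{k}J_{n}(f,\mathbf{x}_{k},\mathbf{q})$ one at a time, reducing the whole inequality to a single reverse-Jensen estimate, in the spirit of the proof of Theorem~\ref{Th2}. The engine is a \emph{one-step identity}: for each $k=1,\dots,N$,
\[
J_{n}\!\left(f,\mathbf{x}_{k+1},\mathbf{p}_{k+1}\right)=J_{n}\!\left(f,\mathbf{x}_{k},\mathbf{p}_{k}\right)-M_{k}J_{n}\!\left(f,\mathbf{x}_{k},\mathbf{q}\right),
\]
where $\mathbf{p}_{k+1}$ and $\mathbf{x}_{k+1}$ are the recursively defined data. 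To prove it I would use the defining relation $p_{j_{k},k}=M_{k}q_{j_{k}}$, which makes the $i=j_{k}$ contribution cancel in each sum below. Two bookkeeping checks are needed: mass preservation $\sum_{i}p_{i,k+1}=1$ and barycenter preservation $\sum_{i}p_{i,k+1}x_{i,k+1}=\sum_{i}p_{i,k}x_{i,k}$; the latter forces the outer terms $f(\sum_{i}p_{i,k+1}x_{i,k+1})$ and $f(\sum_{i}p_{i,k}x_{i,k})$ to coincide, while the weighted sum rearranges as $\sum_{i}p_{i,k+1}f(x_{i,k+1})=\sum_{i}p_{i,k}f(x_{i,k})-M_{k}\sum_{i}q_{i}f(x_{i,k})+M_{k}f(\sum_{i}q_{i}x_{i,k})$. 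Both are routine, using only $p_{j_{k},k}=M_{k}q_{j_{k}}$ and $\sum_{i}q_{i}=1$.

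Telescoping over $k=1,\dots,N$ then gives
\[
J_{n}\!\left(f,\mathbf{x}_{N+1},\mathbf{p}_{N+1}\right)=J_{n}\!\left(f,\mathbf{x}_{1},\mathbf{p}_{1}\right)-\sum_{k=1}^{N}M_{k}J_{n}\!\left(f,\mathbf{x}_{k},\mathbf{q}\right),
\]
so the asserted inequality is equivalent to the single statement $J_{n}(f,\mathbf{x}_{N+1},\mathbf{p}_{N+1})\leq 0$. I would next track the sign pattern of the peeled weights. Because $M_{1}$ is a maximum, $p_{i,2}=p_{i,1}-M_{1}q_{i}\leq 0$ for $i\neq j_{1}$ while $p_{j_{1},2}=M_{1}\geq 1$; hence the maximizing index never moves, $j_{k}=j_{1}$ for all $k$, and inductively $\mathbf{p}_{k+1}$ carries exactly one positive entry $p_{j_{1},k+1}=M_{k}\geq 1$ (located at $x_{j_{1},k+1}=\sum_{i}q_{i}x_{i,k}$), with every other entry $\leq 0$ and total mass $1$. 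In passing, $p_{j_{1},k+1}=M_{k}$ together with $M_{k}=p_{j_{1},k}/q_{j_{1}}$ yields $M_{k}=M_{k-1}/q_{j_{1}}=p_{j_{1},1}/q_{j_{1}}^{\,k}$, the stated formula.

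The heart of the argument is a \emph{reverse Jensen inequality} for this sign pattern: if a signed weight vector has one entry $w_{0}\geq 1$ at a point $x_{0}$, the rest $w_{l}\leq 0$, and $w_{0}+\sum_{l}w_{l}=1$, then $J_{n}\leq 0$. I would prove it by setting $S=w_{0}-1=-\sum_{l}w_{l}\geq 0$ and, when $S>0$, $z=\frac{1}{S}\sum_{l}|w_{l}|x_{l}$, a genuine convex combination of the remaining points; then $x_{0}=\frac{1}{w_{0}}\bar{x}+\frac{S}{w_{0}}z$ with barycenter $\bar{x}=\sum_{i}w_{i}x_{i}$, and convexity gives $w_{0}f(x_{0})\leq f(\bar{x})+Sf(z)$. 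Substituting this into $J_{n}=w_{0}f(x_{0})-\sum_{l}|w_{l}|f(x_{l})-f(\bar{x})$ leaves $J_{n}\leq Sf(z)-\sum_{l}|w_{l}|f(x_{l})$, and a second use of Jensen, $Sf(z)\leq\sum_{l}|w_{l}|f(x_{l})$, closes it to $J_{n}\leq 0$. Applying this with $w_{0}=p_{j_{1},N+1}=M_{N}$ completes the proof.

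The step I expect to be delicate is not the peeling algebra but the legitimacy of the reverse-Jensen estimate: I must verify that every point at which $f$ is evaluated lies in $[a,b)$. Since only the coordinate $x_{j_{1},\cdot}$ ever changes, the points $x_{i,k}$ with $i\neq j_{1}$ equal the originals $x_{i,1}\in[a,b)$; the barycenter $\bar{x}=\sum_{i}p_{i,1}x_{i,1}$ (preserved under peeling) and the auxiliary point $z$ are convex combinations of these $x_{i,1}$, hence lie in their closed convex hull $[\min_{i}x_{i,1},\max_{i}x_{i,1}]\subseteq[a,b)$, so convexity of $f$ is available at each of them and the half-open right endpoint causes no difficulty.
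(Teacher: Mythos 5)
Your proposal is correct and follows essentially the same route as the paper's technique (Theorem \ref{Th3} itself is stated here without a written proof, being \cite[Theorem 6]{A1}, but the method is displayed in the proof of the analogous Theorem \ref{Th9}): the same one-step peeling identity with telescoping, the same observation that only the weight at $j_{1}$ stays positive so that $j_{k}=j_{1}$ and $M_{k}=p_{j_{1},1}/q_{j_{1}}^{k}$, and a final convexity step. Your standalone reverse-Jensen lemma (two applications of convexity via the auxiliary point $z$) is merely a repackaging of the paper's single application of Jensen's inequality to the nonnegative weights $\frac{1}{M_{N}}$ and $q_{i}-\frac{p_{i,N}}{M_{N}}$, $i\neq j_{1}$.
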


In Section 2, theorems \ref{Th6} and \ref{Th7} we extend Theorem \ref{Th2}
and Theorem \ref{Th3}, hence\ also \cite[Theorem 1]{S} by replacing $\mathbf{%
q=}\left( q_{1},...,q_{n}\right) $\ in the $N$ Jensen functionals with $%
\mathbf{q}_{k},$ $k=1,...,N$.

The extensions in theorems \ref{Th8} and \ref{Th9} are obtained by replacing
in Theorem \ref{Th2} and Theorem \ref{Th3} $\mathbf{q=}\left(
q_{1},...,q_{n}\right) $\ in the $N$ Jensen functional with $\mathbf{q}%
_{k}=\left( q_{1,k},...,q_{n_{k},k}\right) $, when $n_{k+1}\leq n_{k}\leq
n_{1},$ $k=1,...,N$. These theorems extend the following theorem in \cite%
{SDB} and \cite{TTD}.

\begin{theorem}
\bigskip \label{Th4} \cite{SDB}. Let $\alpha =\left( \alpha _{1},...\alpha
_{n}\right) ,$ $\beta =\left( \beta ,...,\beta _{n}\right) ,$ $\gamma
=\left( \gamma _{1},...,\gamma _{n}\right) \in p_{n}$ satisfy that $\beta
_{i}+\gamma _{i}>0$\ for each $i=1,...,n$. If $f$\ is a convex function on
an interval $I:=\left[ a,b\right] $\ and $\mathbf{x}=\left(
x_{1},...,x_{n}\right) \in I^{n}$, then 
\begin{eqnarray*}
&&\underset{1\leq i\leq n}{\min }\left\{ \frac{\alpha _{i}}{\beta
_{i}+\gamma _{i}}\right\} \left[ J_{n}\left( f,\mathbf{x},\beta \right)
+J_{n}\left( f,\mathbf{x},\gamma \right) \right] \\
&\leq &J_{n}\left( f,\mathbf{x},\alpha \right) \leq 2\underset{1\leq i\leq n}%
{\max }\left\{ \frac{\alpha _{i}}{\beta _{i}+\gamma _{i}}\right\}
J_{n}\left( f,\mathbf{x},\left( \frac{\beta +\gamma }{2}\right) \right) .
\end{eqnarray*}
\end{theorem}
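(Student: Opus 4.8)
The plan is to pass from the normalized functional $J_{n}$ to the \emph{un-normalized} Jensen functional
\[
\widetilde{J}\left(f,\mathbf{x},\mathbf{w}\right)=\sum_{i=1}^{n}w_{i}f\left(x_{i}\right)-\left(\sum_{i=1}^{n}w_{i}\right)f\!\left(\frac{\sum_{i=1}^{n}w_{i}x_{i}}{\sum_{i=1}^{n}w_{i}}\right),
\]
defined for any non-negative weight vector $\mathbf{w}$ with $\sum_{i}w_{i}>0$, and to exploit three elementary properties. First, $\widetilde{J}$ is positively homogeneous, $\widetilde{J}\left(f,\mathbf{x},c\mathbf{w}\right)=c\,\widetilde{J}\left(f,\mathbf{x},\mathbf{w}\right)$ for $c>0$, which is immediate since the barycenter is unchanged. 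Second, $\widetilde{J}\left(f,\mathbf{x},\mathbf{w}\right)\geq 0$ by Jensen's inequality applied to the normalized weights $w_{i}/\sum_{j}w_{j}$. Third, and crucially, $\widetilde{J}$ is superadditive,
\[
\widetilde{J}\left(f,\mathbf{x},\mathbf{w}+\mathbf{v}\right)\geq\widetilde{J}\left(f,\mathbf{x},\mathbf{w}\right)+\widetilde{J}\left(f,\mathbf{x},\mathbf{v}\right),
\]
which I would prove by observing that the difference of the two sides equals $W f\left(\bar{x}_{\mathbf{w}}\right)+V f\left(\bar{x}_{\mathbf{v}}\right)-(W+V)f\left(\bar{x}_{\mathbf{w}+\mathbf{v}}\right)$, where $W,V$ are the total masses and the barycenter $\bar{x}_{\mathbf{w}+\mathbf{v}}=\frac{W\bar{x}_{\mathbf{w}}+V\bar{x}_{\mathbf{v}}}{W+V}$ is a convex combination of $\bar{x}_{\mathbf{w}}$ and $\bar{x}_{\mathbf{v}}$; convexity of $f$ then delivers the inequality. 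Combining non-negativity with superadditivity yields monotonicity: if $\mathbf{u}\leq\mathbf{v}$ componentwise then $\widetilde{J}\left(f,\mathbf{x},\mathbf{v}\right)\geq\widetilde{J}\left(f,\mathbf{x},\mathbf{u}\right)+\widetilde{J}\left(f,\mathbf{x},\mathbf{v}-\mathbf{u}\right)\geq\widetilde{J}\left(f,\mathbf{x},\mathbf{u}\right)$. Finally, $\widetilde{J}$ agrees with $J_{n}$ whenever the weights sum to $1$.

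For the left-hand inequality, put $m=\min_{i}\frac{\alpha_{i}}{\beta_{i}+\gamma_{i}}$, so that $m(\beta_{i}+\gamma_{i})\leq\alpha_{i}$ for every $i$, i.e. $m(\beta+\gamma)\leq\alpha$ componentwise. Using homogeneity and then superadditivity,
\[
m\left[J_{n}\left(f,\mathbf{x},\beta\right)+J_{n}\left(f,\mathbf{x},\gamma\right)\right]=\widetilde{J}\left(f,\mathbf{x},m\beta\right)+\widetilde{J}\left(f,\mathbf{x},m\gamma\right)\leq\widetilde{J}\left(f,\mathbf{x},m(\beta+\gamma)\right),
\]
and monotonicity with $m(\beta+\gamma)\leq\alpha$ together with $\sum_{i}\alpha_{i}=1$ gives $\widetilde{J}\left(f,\mathbf{x},m(\beta+\gamma)\right)\leq\widetilde{J}\left(f,\mathbf{x},\alpha\right)=J_{n}\left(f,\mathbf{x},\alpha\right)$, which is the claimed lower bound.

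For the right-hand inequality, put $M=\max_{i}\frac{\alpha_{i}}{\beta_{i}+\gamma_{i}}$, so that $\alpha\leq M(\beta+\gamma)$ componentwise. Monotonicity gives $J_{n}\left(f,\mathbf{x},\alpha\right)=\widetilde{J}\left(f,\mathbf{x},\alpha\right)\leq\widetilde{J}\left(f,\mathbf{x},M(\beta+\gamma)\right)$, and since $\beta+\gamma=2\cdot\frac{\beta+\gamma}{2}$ with $\frac{\beta+\gamma}{2}$ a probability vector, two applications of homogeneity yield
\[
\widetilde{J}\left(f,\mathbf{x},M(\beta+\gamma)\right)=2M\,\widetilde{J}\left(f,\mathbf{x},\tfrac{\beta+\gamma}{2}\right)=2M\,J_{n}\left(f,\mathbf{x},\tfrac{\beta+\gamma}{2}\right),
\]
establishing the upper bound.

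The conceptual obstacle lies entirely in the first paragraph: recognizing that the natural setting is the un-normalized functional $\widetilde{J}$ and isolating superadditivity as the driving mechanism. Once homogeneity, non-negativity, superadditivity, and the resulting monotonicity are in hand, both inequalities reduce to short formal manipulations; in particular no separate appeal to Theorem \ref{Th1} is needed, and indeed Theorem \ref{Th1} itself is recovered by the same monotonicity argument. The only routine point to verify is that every weight vector occurring has positive total mass so that its barycenter is defined; in the degenerate case $m=0$ the lower bound collapses to the trivial statement $0\leq J_{n}\left(f,\mathbf{x},\alpha\right)$, which is Jensen's inequality.
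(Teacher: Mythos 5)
Your argument is correct. Note that the paper gives no proof of Theorem \ref{Th4} at all; it is quoted from \cite{SDB} as background, so the only comparison available is with the technique the paper uses for the results it does prove (theorems \ref{Th2}, \ref{Th3}, \ref{Th8}, \ref{Th9}): the Dragomir-style rewriting of $\sum_{i}\alpha _{i}f\left( x_{i}\right) -mJ_{n}\left( f,\mathbf{x},\mathbf{q}\right) $ as $\sum_{i}\left( \alpha _{i}-mq_{i}\right) f\left( x_{i}\right) +mf\left( \sum_{i}q_{i}x_{i}\right) $, i.e.\ as a new convex combination with unchanged barycenter, followed by one application of Jensen's inequality. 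Your route is genuinely different in organization: you pass to the unnormalized functional $\widetilde{J}$ and establish homogeneity, non-negativity and superadditivity once, after which both bounds (and Theorem \ref{Th1} itself) follow by formal monotonicity. The superadditivity lemma is at bottom the same convexity step --- the inequality $Wf\left( \bar{x}_{\mathbf{w}}\right) +Vf\left( \bar{x}_{\mathbf{v}}\right) \geq \left( W+V\right) f\left( \bar{x}_{\mathbf{w}+\mathbf{v}}\right) $ is two-point convexity at the barycenters --- but packaging it as a property of $\widetilde{J}$ makes the weight bookkeeping trivial, dispenses with the index-splitting and recursive redefinition of tuples that the paper's proofs require, and handles the two functionals $J_{n}\left( f,\mathbf{x},\beta \right) $ and $J_{n}\left( f,\mathbf{x},\gamma \right) $ on the left-hand side in one stroke. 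What the paper's decomposition buys in exchange is iterability: repeatedly subtracting $m_{k}J_{n}\left( f,\mathbf{x}_{k},\mathbf{q}_{k}\right) $ and tracking the new tuples is exactly what produces the $N$-term refinements of theorems \ref{Th6}--\ref{Th9}, which a one-shot monotonicity argument does not immediately give. Your treatment of the degenerate cases ($m=0$, and the possible vanishing of the total mass of $\mathbf{v}-\mathbf{u}$ in the monotonicity step) is the only delicate point, and you handle it correctly.
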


\begin{theorem}
\label{Th5} \cite[Theorem 2.1]{TTD} Under the hypotheses and notations as in
Theorem \ref{Th1}, we have the Jensen-Dragomir type inequalties 
\begin{equation*}
mJ_{n}\left( f,\mathbf{x},\beta \right) +\mathbf{m}^{\ast }\left( \left\vert
J\right\vert +1\right) H_{j}\leq J_{n}\left( f,\mathbf{x},\alpha \right)
\leq mJ_{n}\left( f,\mathbf{x},\beta \right) +\mathbf{M}^{\ast }\left(
\left\vert J\right\vert +1\right) H_{j},
\end{equation*}%
where \ $J=\left\{ i:\alpha _{i}-\beta _{i}\neq 0\right\} ,$ $\left\vert
J\right\vert $ is the cardinal of $J$, $\mathbf{m}^{\ast }=$\ $\underset{%
i\in J}{min}\left\{ m,\alpha _{i}-m\beta _{i}\right\} ,$ $\mathbf{M}^{\ast }=%
\underset{i\in J}{\max }\left\{ m,\alpha _{i}-m\beta _{i}\right\} $, and%
\begin{equation*}
H_{J}:=\frac{1}{\left\vert J\right\vert +1}\left[ \sum_{i\in J}f\left(
x_{i}\right) +f\left( \sum_{i=1}^{n}\beta _{i}x_{i}\right) \right] -f\left( 
\frac{1}{\left\vert J\right\vert +1}\left( \sum_{i\in
J}x_{i}+\sum_{i=1}^{n}\beta _{i}x_{i}\right) \right) .
\end{equation*}
\end{theorem}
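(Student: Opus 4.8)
The plan is to collapse the two-sided estimate into a single comparison between two Jensen functionals built over one enlarged system of points, and then to read off the multiplicative constants from a Dragomir-type comparison of arbitrary weights against equal weights, exactly as in Theorem \ref{Th1}. First I would set $m=\min_{1\le i\le n}(\alpha_i/\beta_i)$ and introduce $c_i:=\alpha_i-m\beta_i\ge 0$, so that $\sum_{i=1}^n c_i=1-m$ and $c_i=0$ at the index realizing the minimum. Writing $\bar x_\beta:=\sum_{i=1}^n\beta_i x_i$, the identity $\sum_{i=1}^n\alpha_i x_i=m\bar x_\beta+\sum_{i=1}^n c_i x_i$ together with $m+\sum_i c_i=1$ exhibits $\sum_i\alpha_i x_i$ as a convex combination of the $n+1$ points $x_1,\dots,x_n,\bar x_\beta$ with weights $c_1,\dots,c_n,m$. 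Substituting this into $J_n(f,\mathbf{x},\alpha)-mJ_n(f,\mathbf{x},\beta)$ and regrouping, I expect to obtain
\begin{equation*}
J_n(f,\mathbf{x},\alpha)-mJ_n(f,\mathbf{x},\beta)=\sum_{i=1}^n c_i f(x_i)+mf(\bar x_\beta)-f\Big(\sum_{i=1}^n c_i x_i+m\bar x_\beta\Big),
\end{equation*}
that is, an unnormalized Jensen functional on the enlarged system with weight vector $(c_1,\dots,c_n,m)$. Its nonnegativity is immediate from convexity and already reproduces the lower Dragomir bound; the real task is to trap this gap between two explicit multiples of the equal-weight gap $H_J$.

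Next I would compare this weighted gap with $(|J|+1)H_J$, which is precisely the unnormalized Jensen functional carrying uniform weights $1/(|J|+1)$ on the subsystem $\{x_i:i\in J\}\cup\{\bar x_\beta\}$. This is the situation of Theorem \ref{Th1}: placing the weights $(c_i)_i\cup\{m\}$ against the uniform weights and rescaling by the common total mass, the relevant ratios are exactly $m$ and the numbers $\alpha_i-m\beta_i$, whose extremes over the active indices are $\mathbf{m}^{\ast}$ and $\mathbf{M}^{\ast}$. One should note that the minimizing index lies in $J$ whenever $m<1$, so that $\mathbf{m}^{\ast}=0$ and the lower inequality degenerates to the nonnegativity already observed; the genuine content is therefore the upper inequality $J_n(f,\mathbf{x},\alpha)-mJ_n(f,\mathbf{x},\beta)\le\mathbf{M}^{\ast}(|J|+1)H_J$.

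The delicate step, which I expect to be the main obstacle, is the reconciliation of supports: the weighted gap ranges over all of $\{1,\dots,n\}$, whereas $H_J$, $\mathbf{m}^{\ast}$ and $\mathbf{M}^{\ast}$ see only $i\in J$. Two kinds of indices must be controlled — the minimizing index, where $c_i=0$ so the point is weightless in the gap yet carries weight one in $H_J$, and the indices $i\notin J$, where $\alpha_i=\beta_i$ forces $c_i=(1-m)\beta_i>0$ so the point feeds the gap but is absent from $H_J$. I would absorb these using the monotonicity and superadditivity of the unnormalized Jensen functional in its weights, so that the indices outside $J$ do not inflate the extremal ratios, while the vanishing weight at the minimizing index is disposed of by a continuity argument, Theorem \ref{Th1} formally demanding strict positivity. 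Once this bookkeeping is settled, the constants collapse to $\mathbf{m}^{\ast}$ and $\mathbf{M}^{\ast}$ and the factor $|J|+1$ emerges as the number of equally weighted points defining $H_J$.
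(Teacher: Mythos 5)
First, a point of reference: the paper does not prove Theorem \ref{Th5} at all; it is quoted from the preprint \cite{TTD} as motivation for Theorems \ref{Th8} and \ref{Th9}, so there is no in-paper proof to compare yours against. Your decomposition is certainly the intended mechanism: with $c_i=\alpha_i-m\beta_i\ge 0$ and $\bar x_\beta=\sum_{i=1}^n\beta_ix_i$ one has $J_n(f,\mathbf{x},\alpha)-mJ_n(f,\mathbf{x},\beta)=\sum_ic_if(x_i)+mf(\bar x_\beta)-f\left(\sum_ic_ix_i+m\bar x_\beta\right)$, and the constants $\mathbf{m}^{\ast}(|J|+1)$ and $\mathbf{M}^{\ast}(|J|+1)$ are exactly the extremal ratios of the weights $(c_i,m)$ against the uniform weights $1/(|J|+1)$ defining $H_J$, so Theorem \ref{Th1} is the right tool.

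However, the step you flag as delicate is not a bookkeeping issue that monotonicity or superadditivity can absorb; it is a genuine obstruction, and with $J=\{i:\alpha_i\ne\beta_i\}$ as written the upper bound is simply false. Take $f(t)=t^2$, $\mathbf{x}=(0,1,2)$, $\beta=(\tfrac13,\tfrac13,\tfrac13)$, $\alpha=(\tfrac13,\tfrac16,\tfrac12)$. Then $m=\tfrac12$, $J=\{2,3\}$, $\mathbf{M}^{\ast}=\tfrac12$, $J_3(f,\mathbf{x},\beta)=\tfrac23$, $H_J=\tfrac29$, so the claimed upper bound is $\tfrac13+\tfrac12\cdot 3\cdot\tfrac29=\tfrac23$, while $J_3(f,\mathbf{x},\alpha)=\tfrac{29}{36}>\tfrac23$. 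The culprit is exactly the index $i=1\notin J$ carrying residual weight $c_1=(1-m)\beta_1>0$: superadditivity of the Jensen functional in its weights yields lower bounds when mass is added, never the upper bound you need, so those indices cannot be discarded. The statement becomes true, and your argument closes with no loose ends, once $J$ is read as $\{i:\alpha_i-m\beta_i\ne 0\}$ (apparently a slip in the quoted statement): then the residual weights are supported exactly on $J$, both functionals live on the same $|J|+1$ points $\{x_i\}_{i\in J}\cup\{\bar x_\beta\}$ with strictly positive weights summing to one, and Theorem \ref{Th1} gives both bounds at once. Note that with this reading the minimizing index is excluded from $J$, so $\mathbf{m}^{\ast}>0$ and the lower bound is not the degenerate nonnegativity statement you describe.
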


\section{\protect\bigskip Convexity and extended normalized Jensen functional%
}

We start this section with theorems \ref{Th6} and \ref{Th7} which extend
theorems \ref{Th2} and \ref{Th3}. All we need to do in order to prove
theorems \ref{Th6} and \ref{Th7} is to replace in in theoems \ref{Th2} and %
\ref{Th3} $\mathbf{q=}\left( q_{1},...,q_{n}\right) $ with $\mathbf{q}_{k}%
\mathbf{=}\left( q_{1,k},...,q_{n,k}\right) ,$\ $k=1,...,N$. Therefore we do
not find it necessary to prove these two theorems but only to emphasize
where the notations, definitions and conditions differ.\ 

In Theorem \ref{Th6} we use the following conditions and notations:

$0\leq p_{i,1}\leq 1$, $0<q_{i,k}\leq 1,\ i=1,...,n$, $\
\sum_{i=1}^{n}p_{i,1}=\sum_{i=1}^{n}q_{i,k}=1,$ $k=1,...,N$.

Denote $m_{1}=\underset{1\leq i\leq n}{\min }\left( \frac{p_{i,1}}{q_{i,1}}%
\right) $ and $s_{1}$ the number of $i$-th for which $m_{1}$ occur.

Define recoursively%
\begin{eqnarray}
p_{i,k} &=&\left\{ 
\begin{array}{cc}
p_{i,k-1}-m_{k-1}q_{i,k-1}, & m_{k-1}\neq \frac{p_{i,k-1}}{q_{i,k-1}} \\ 
\frac{1}{s_{k-1}}m_{k-1}, & m_{k-1}=\frac{p_{i,k-1}}{q_{i,k-1}}%
\end{array}%
\right. ,\qquad k=2,...  \label{2.1} \\
m_{k-1} &=&%
\begin{array}{cc}
\underset{1\leq i\leq n}{\min }\left( \frac{p_{i,k-1}}{q_{i,k-1}}\right) , & 
k=2,...,%
\end{array}%
\qquad \qquad  \notag
\end{eqnarray}

\bigskip\ and denote $s_{k-1}$ as the number of cases for which $m_{k-1}$
occurs.

Let also $x_{i,1}\in \left( a,b\right) ,$ $i=1,...,n$ and define
recoursively 
\begin{equation}
x_{i,k}={\Large \{}%
\begin{array}{cc}
x_{i,k-1\hspace{0in}}, & m_{k-1}\neq \frac{p_{i,k-1}}{q_{i,k-1}} \\ 
\sum_{i=1}^{n}q_{i,k-1}x_{i,k-1}, & m_{k-1}=\frac{p_{i,k-1}}{q_{i,k-1}}%
\end{array}%
,  \label{2.2}
\end{equation}%
$i=1,...n$, $k=2,...,N$.

We emphasize that $x_{i,k\hspace{0in}},$\ $m_{k}$ in (\ref{2.2}) means that $%
\mathbf{x}_{k},\ $and $m_{k}$, $k=2,...,N$\ are recoursively constructed,
but $\mathbf{q}_{k},$ $k=1,...,N$\ are apriori given and not constructed
recoursively. \ 

\begin{theorem}
\label{Th6} Suppose that $f:\left[ a,b\right) \rightarrow 
\mathbb{R}
,$ $a<b\leq \infty $ is a convex function. Then, for every integer $N,$ 
\begin{equation}
J_{n}\left( f,\mathbf{x}_{1},\mathbf{p}_{1}\right)
-\sum_{k=1}^{N}m_{k}J_{n}\left( f,\mathbf{x}_{k},\mathbf{q}_{k}\right) \geq
0,  \label{2.3}
\end{equation}%
where $\mathbf{p}_{1}=\left( p_{1,1},...,p_{n,1}\right) ,$\ $\mathbf{q}_{k}%
\mathbf{=}\left( q_{1,k},...,q_{n,k}\right) \mathbf{,}$ $\mathbf{x}_{k}=$\ $%
\left( x_{1,k},...,x_{n,k}\right) ,$ $k=1,...,N$, , $\sum_{i=1}^{n}p_{i,1}=%
\sum_{i=1}^{n}q_{i,k}=1,$ $k=1,...,N$, $p_{i,1}\geq 0,$\ $q_{i,k}>0,$ $%
i=1,...,n$, and $m_{k}=\underset{1\leq i\leq n}{\min }\left( \frac{p_{i,k}}{%
q_{i,k}}\right) $ $k=1,...,N$,$\ $where $p_{i,k}$, $m_{k},$ $x_{i,k},$ are
as denoted in (\ref{2.1}) and (\ref{2.2}).
\end{theorem}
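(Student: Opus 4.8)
The plan is to establish \eqref{2.3} from an \emph{exact} one-step telescoping identity, reserving convexity for a single application of Jensen's inequality at the very end. For each $k$ write $A_{k}=\left\{ i:m_{k}=p_{i,k}/q_{i,k}\right\} $ for the set of $s_{k}$ indices at which the minimum $m_{k}$ is attained and $B_{k}$ for its complement. First I would record two bookkeeping facts that follow directly from \eqref{2.1} and \eqref{2.2}. (i) Each $\mathbf{p}_{k}$ is again a nonnegative probability vector: on $B_{k}$ one has $p_{i,k}-m_{k}q_{i,k}\geq 0$ because $m_{k}=\min_{i}\left( p_{i,k}/q_{i,k}\right) $, while the merged block contributes $\sum_{i\in A_{k}}\tfrac{1}{s_{k}}m_{k}=m_{k}$, and the masses sum to $1$. (ii) The barycenter is invariant, i.e. $\sum_{i=1}^{n}p_{i,k+1}x_{i,k+1}=\sum_{i=1}^{n}p_{i,k}x_{i,k}$ for every $k$. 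This second fact is the crux: on $A_{k}$ one has $p_{i,k}=m_{k}q_{i,k}$, and the construction replaces those nodes by the single common node $\sum_{i=1}^{n}q_{i,k}x_{i,k}$ carrying total mass $m_{k}$, so their contribution reassembles exactly into $\sum_{i\in A_{k}}p_{i,k}x_{i,k}$ and the barycenter is unchanged.

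With these in hand, the key step is the single-step identity
\begin{equation*}
J_{n}\left( f,\mathbf{x}_{k},\mathbf{p}_{k}\right) =m_{k}J_{n}\left( f,\mathbf{x}_{k},\mathbf{q}_{k}\right) +J_{n}\left( f,\mathbf{x}_{k+1},\mathbf{p}_{k+1}\right) ,\qquad k=1,\dots ,N.
\end{equation*}
To prove it I would expand all three functionals. By barycenter invariance the terms $-f\left( \sum_{i}p_{i,k}x_{i,k}\right) $ and $+f\left( \sum_{i}p_{i,k+1}x_{i,k+1}\right) $ cancel, and the two occurrences of $\pm m_{k}f\left( \sum_{i}q_{i,k}x_{i,k}\right) $ — one from the peeled-off functional, one from the merged block of $\mathbf{x}_{k+1}$ — cancel as well. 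What survives is exactly $\sum_{i\in A_{k}}\left( p_{i,k}-m_{k}q_{i,k}\right) f\left( x_{i,k}\right) $, which vanishes term by term since $p_{i,k}=m_{k}q_{i,k}$ on $A_{k}$. Hence the single-step relation is a genuine equality; no inequality, and in particular no convexity of $f$, is spent here.

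Summing this identity over $k=1,\dots ,N$ telescopes to
\begin{equation*}
J_{n}\left( f,\mathbf{x}_{1},\mathbf{p}_{1}\right) -\sum_{k=1}^{N}m_{k}J_{n}\left( f,\mathbf{x}_{k},\mathbf{q}_{k}\right) =J_{n}\left( f,\mathbf{x}_{N+1},\mathbf{p}_{N+1}\right) ,
\end{equation*}
the recursion being extended one step to define $\mathbf{x}_{N+1},\mathbf{p}_{N+1}$. Since $\mathbf{p}_{N+1}$ is a nonnegative probability vector and every node $x_{i,N+1}$ is either an original point of $\left( a,b\right) $ or a convex combination of such points, hence lies in $\left[ a,b\right) $, the right-hand side is $\geq 0$ by Jensen's inequality for the convex $f$. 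This is the only point where convexity enters, and it yields \eqref{2.3}.

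The single feature distinguishing Theorem \ref{Th6} from Theorem \ref{Th2} is that the fixed tuple $\mathbf{q}$ is replaced by a step-dependent $\mathbf{q}_{k}$. The point I would emphasize is that this costs nothing: every object appearing in the single-step identity at level $k$ — the ratio minimum $m_{k}$, the merged node $\sum_{i}q_{i,k}x_{i,k}$, and the subtracted functional $m_{k}J_{n}\left( f,\mathbf{x}_{k},\mathbf{q}_{k}\right) $ — refers only to $\mathbf{q}_{k}$ and never to the $\mathbf{q}$ of any other level, so the levels do not couple and the telescoping goes through verbatim. The main (indeed the only) obstacle is therefore organizational rather than conceptual: one must carefully verify the barycenter invariance and the exact term-by-term cancellation that turn the single-step relation into an equality. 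Once that algebra is checked, the conclusion is immediate.
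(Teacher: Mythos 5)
Your proof is correct and follows essentially the same route as the paper: the paper omits an explicit proof of Theorem \ref{Th6} (deferring to \cite{A1}), but the telescoping argument it displays for Theorem \ref{Th8} --- peel off one $m_{k}J_{n}\left( f,\mathbf{x}_{k},\mathbf{q}_{k}\right) $ per step via the exact identity $\sum_{i}p_{i,k}f\left( x_{i,k}\right) -m_{k}J_{n}\left( f,\mathbf{x}_{k},\mathbf{q}_{k}\right) =\sum_{i}p_{i,k+1}f\left( x_{i,k+1}\right) $, track that $\mathbf{p}_{k+1}$ stays a nonnegative probability vector with invariant barycenter, and spend convexity only once at the end --- is exactly your single-step identity and final application of Jensen. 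Your packaging of the step as an equality between Jensen functionals is a slightly cleaner bookkeeping of the same computation.
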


Replacing in Theorem \ref{Th6} $\mathbf{q}_{i}=\mathbf{q}$, $i=1,...,N$, we
get Theorem \ref{Th2}. Replacing $\mathbf{q}_{i}=\mathbf{q}$, $i=1,...,N$
and replacing $q_{i},$\ $i=1,...,n$\ by $\frac{1}{n}$\ in Theorem \ref{Th6}
we get \cite[Theorem 1]{S}.

\bigskip

Similarly to the extenssions in Theorem \ref{Th6} of Theorem \ref{Th2} (\cite%
[Theorem 5]{A1}) we extend now in Theorem \ref{Th7} (\cite[Theorem 6]{A1}),
Theorem \ref{Th3} and the left hand-side inequality in Theorem \ref{Th1}.

We denote 
\begin{eqnarray}
&&  \label{2.4} \\
\mathbf{p}_{1} &=&\left( p_{1,1},...,p_{n,1}\right) ,\quad \mathbf{q}_{k}%
\mathbf{=}\left( q_{1,k},...,q_{n,k}\right) ,\quad k=1,...,N  \notag \\
\text{where \ }p_{i,1} &\geq &0,\ q_{i,k}>0,\ i=1,...,n,\
\sum_{i=1}^{n}p_{i,1}=\sum_{i=1}^{n}q_{i,k}=1,\ k=1,...,N  \notag \\
M_{1} &=&\underset{1\leq i\leq n}{Max}\left( \frac{p_{i,1}}{q_{i,1}}\right) =%
\frac{p_{j_{1},1}}{q_{j_{1},1}},\quad  \notag
\end{eqnarray}%
where $j_{1}$ is a fixed specific integer for which $M_{1}$\ holds.

We define recoursively $p_{k}=\left( p_{1,k},...,p_{n,k}\right) ,$ $\mathbf{x%
}_{k}=\left( x_{1,k},...,x_{n,k}\right) ,$ and $M_{k}$, $\ k=1,...,N$ as
follows: 
\begin{eqnarray}
&&  \label{2.5} \\
p_{i,k} &=&p_{i,k-1}-M_{k-1}q_{i,k},\ x_{i,k}=x_{i,k-1},\ \text{when}\
M_{k-1}\neq \frac{p_{i,k-1}}{q_{i,k-1}},\ k=2,...,N  \notag \\
p_{i,k} &=&p_{i,k-1}-M_{k-1}q_{i,k-1},\ x_{i,k}=x_{i,k-1},\ \text{when}\
M_{k-1}=\frac{p_{i,k-1}}{q_{i,k-1}},\ i\neq j_{k},\ k=2,...,N  \notag \\
p_{j_{k},k} &=&M_{k-1},\ x_{j_{k},k}=\sum_{i=1}^{n}q_{i,k-1}x_{i,k-1},\ 
\text{when}\ M_{k-1}=\frac{p_{j_{k-1},k-1}}{q_{j_{k-1},k-1}},  \notag \\
M_{k} &=&\underset{1\leq i\leq n}{Max}\left( \frac{p_{i,k}}{q_{i,k}}\right) =%
\frac{p_{j_{k},k}}{q_{j_{k},k}},\qquad k=1,...,N,  \notag
\end{eqnarray}%
where $j_{k}$\ are specific indices for which $M_{k},$ $k=1,...,N$\ hold.

With the notations and conditions in (\ref{2.4}) and (\ref{2.5}) we get:

\begin{theorem}
\bigskip \label{Th7} Let $f:\left[ a,b\right] \rightarrow 
\mathbb{R}
,$ be a convex function, and let (\ref{2.4}) and (\ref{2.5}) hold. Then, for
every integer $N$%
\begin{equation}
J_{n}\left( f,\mathbf{x}_{1},\mathbf{p}_{1}\right)
-\sum_{k=1}^{N}M_{k}J_{n}\left( f,\mathbf{x}_{k},\mathbf{q}_{k}\right) \leq
0,  \label{2.6}
\end{equation}%
and 
\begin{equation}
M_{k}=\frac{p_{j_{1},1}}{\Pi _{m=1}^{k}\left( q_{j_{1},m}\right) },\qquad
k=1,...,N  \label{2.7}
\end{equation}%
hold, where $j_{1}$ is a fixed specific integer for which $M_{1}=\underset{%
1\leq i\leq n}{Max}\left( \frac{p_{i,1}}{q_{i,1}}\right) =\frac{p_{j_{1},1}}{%
q_{j_{1},1}}$ is satisfied.
\end{theorem}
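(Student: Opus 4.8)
The plan is to reduce the whole statement to a single exact ``peeling'' identity which is then telescoped, so that convexity is invoked only once, at the very end. Fix $k\in\{1,\dots,N\}$ and examine the passage from $(\mathbf{p}_k,\mathbf{x}_k)$ to $(\mathbf{p}_{k+1},\mathbf{x}_{k+1})$ governed by (\ref{2.5}): the index $j$ at which $M_k=p_{j,k}/q_{j,k}$ is attained receives the new mass $M_k$ and the new node $\sum_i q_{i,k}x_{i,k}$, while every other node is retained and its mass is decreased by $M_k q_{i,k}$. Using only the two algebraic facts $M_k q_{j,k}=p_{j,k}$ and the preservation of the barycenter $\sum_i p_{i,k+1}x_{i,k+1}=\sum_i p_{i,k}x_{i,k}$ (both obtained by a direct computation from the definitions), I would first establish
\begin{equation*}
J_n\!\left(f,\mathbf{x}_k,\mathbf{p}_k\right)=M_k J_n\!\left(f,\mathbf{x}_k,\mathbf{q}_k\right)+J_n\!\left(f,\mathbf{x}_{k+1},\mathbf{p}_{k+1}\right),\qquad k=1,\dots,N.
\end{equation*}
No convexity is needed: the linear parts $\sum_i p_i f(x_i)$ collapse precisely because $M_k q_{j,k}=p_{j,k}$, and the two $f$-of-barycenter terms cancel because the barycenter is unchanged.

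Summing this identity over $k=1,\dots,N$ telescopes the residual Jensen functionals and gives
\begin{equation*}
J_n\!\left(f,\mathbf{x}_1,\mathbf{p}_1\right)-\sum_{k=1}^{N}M_k J_n\!\left(f,\mathbf{x}_k,\mathbf{q}_k\right)=J_n\!\left(f,\mathbf{x}_{N+1},\mathbf{p}_{N+1}\right),
\end{equation*}
so (\ref{2.6}) is equivalent to the single inequality $J_n(f,\mathbf{x}_{N+1},\mathbf{p}_{N+1})\le 0$. To prepare for it I would track the sign pattern of $\mathbf{p}_k$ by induction: since $M_k=\max_i(p_{i,k}/q_{i,k})$, each off-maximal updated mass $p_{i,k}-M_k q_{i,k}$ is $\le 0$, so for every $k\ge 2$ the tuple $\mathbf{p}_k$ has exactly one strictly positive entry, of value $M_{k-1}\ge 1$ and located at the fixed index $j_1$ (the only index with positive ratio), all remaining entries being non-positive, while $\sum_i p_{i,k}=1$ persists. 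Combined with the barycenter identity, this shows that $\xi:=\sum_i p_{i,N+1}x_{i,N+1}$ equals the original barycenter $\sum_i p_{i,1}x_{i,1}\in[a,b]$, so every node and $\xi$ lie in the domain of $f$.

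The inequality $J_n(f,\mathbf{x}_{N+1},\mathbf{p}_{N+1})\le 0$ is then a converse-Jensen argument, which I expect to be the only genuinely nontrivial step. Writing $j=j_1$ and $v_i=-p_{i,N+1}\ge 0$ for $i\ne j$, so that $\sum_{i\ne j}v_i=M_N-1$, the unique positive node admits the representation
\begin{equation*}
x_{j,N+1}=\frac{1}{M_N}\,\xi+\sum_{i\ne j}\frac{v_i}{M_N}\,x_{i,N+1},
\end{equation*}
which is a genuine convex combination because its coefficients are non-negative and sum to $1$ (here $M_N\ge 1$ is used). Applying convexity of $f$ to this representation and multiplying through by $M_N$ yields $M_N f(x_{j,N+1})\le f(\xi)+\sum_{i\ne j}v_i f(x_{i,N+1})$, which is exactly $J_n(f,\mathbf{x}_{N+1},\mathbf{p}_{N+1})\le 0$.

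Finally, the closed form (\ref{2.7}) drops out of the same induction. Once one knows the maximum is attained at the fixed index $j_1$ for every $k$, the update $p_{j_1,k}=M_{k-1}$ forces $M_k=p_{j_1,k}/q_{j_1,k}=M_{k-1}/q_{j_1,k}$, and unwinding from $M_1=p_{j_1,1}/q_{j_1,1}$ produces $M_k=p_{j_1,1}/\prod_{m=1}^{k}q_{j_1,m}$. The delicate points throughout are the bookkeeping of the index $j_1$ together with the induction on the sign pattern, and the verification that all nodes and barycenters remain in $[a,b]$ so that $f$ may legitimately be evaluated; everything else is the algebra of the telescoping identity. This is also why, as remarked before Theorem \ref{Th6}, the argument is a verbatim adaptation of the proof of Theorem \ref{Th3} with $\mathbf{q}$ replaced by $\mathbf{q}_{k}$ at each stage.
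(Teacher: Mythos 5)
Your proposal is correct and follows essentially the same route as the paper: the paper's argument (given for the analogous Theorems \ref{Th8} and \ref{Th9}, and referenced from \cite{A1} for Theorem \ref{Th7}) is exactly your peeling identity written out as a long chain of equalities reducing the left side of (\ref{2.6}) to $\sum_i p_{i,N+1}f(x_{i,N+1})$, followed by the same single converse-Jensen step — indeed your coefficients $v_i/M_N$ coincide with the paper's $q_{i,N}-p_{i,N}/M_N$. Your packaging as one identity plus telescoping, and the explicit check that $M_k\ge 1$ so the final combination is genuinely convex, are welcome clarifications but not a different proof.
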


Theorems \ref{Th8} and \ref{Th9} extend theorems\ref{Th1}, \ref{Th4} and \ref%
{Th5}.

The difference between Theorem \ref{Th6} and Theorem \ref{Th8} is that
inTheorem \ref{Th8} we eliminate all the terms that satisfy $\left(
p_{j_{i},k}-m_{k}q_{j_{i},k}\right) f\left( x_{i}\right) $ where $%
p_{j_{i},k}-m_{k}q_{j_{i},k}=0$\ and add instead only one term equal to $%
m_{k}f\left( \sum_{i=1}^{n_{k}}q_{i,k}x_{i,k}\right) $ therefore $\mathbf{q}%
_{k}=\left( q_{1},...,q_{n_{k}}\right) ,$ $k=1,...,N$\ satisfy $n_{1}\geq
n_{2}\geq ...\geq n_{N}$.\ 

In Theorem \ref{Th8} we use the following conditions and notations:

$0\leq p_{i,1}\leq 1$, $0<q_{i,1}\leq 1$, $i=1,...,n_{1},\
\sum_{i=1}^{n_{1}}p_{i,1}=\sum_{i=1}^{n_{1}}q_{i,1}=1,$ $m_{1}=\underset{%
i=1,...,n_{1}}{\min }\left( \frac{p_{i,1}}{q_{i,1}}\right) $.

We define recoursively,%
\begin{eqnarray}
&&%
\begin{array}{cccc}
p_{i,k}= & p_{j_{i},k-1}-m_{k-1}q_{j_{i},k-1}, & m_{k-1}\neq \frac{p_{i,k-1}%
}{q_{i,k-1}} & i=1,...,n_{k-1}-1 \\ 
p_{n_{k},k}= & m_{k-1}, & m_{k-1}=\frac{p_{i,k-1}}{q_{i,k-1}} & 
\end{array}
\label{2.8} \\
&&%
\begin{array}{ccc}
q_{i,k-1}=q_{j_{i},k-1}, & i=1,...,n_{k-1}, & k=2,...,N%
\end{array}%
\qquad  \notag
\end{eqnarray}%
\begin{equation*}
m_{k-1}=%
\begin{array}{cc}
\underset{1\leq i\leq n_{k-1}}{\min }\left( \frac{p_{i,k-1}}{q_{i,k-1}}%
\right) , & k=2,...N,%
\end{array}%
\qquad \qquad \qquad \qquad \qquad \qquad
\end{equation*}

and%
\begin{equation}
\begin{array}{cccc}
x_{i,k}= & x_{j_{i},k-1\hspace{0in}}, & m_{k-1}\neq \frac{p_{i,k-1}}{%
q_{i,k-1}} & i=1,...,n_{k-1}-1 \\ 
x_{n_{k},k}= & \sum_{i=1}^{n}q_{i,k-1}x_{i,k-1}, & m_{k-1}=\frac{p_{i,k-1}}{%
q_{i,k-1}} & 
\end{array}
\label{2.9}
\end{equation}%
$k=2,...,N$.

Because of the similar techniques to those in the proof of Theorem \ref{Th2}
(see \cite{A1}), we show here only an outline of the proof.

\begin{theorem}
\label{Th8} Suppose that $f:\left[ a,b\right] \rightarrow 
\mathbb{R}
,$ is a convex function. Then, for every integer $N,$ 
\begin{equation*}
J_{n_{1}}\left( f,\mathbf{x}_{1},\mathbf{p}_{1}\right)
-\sum_{k=1}^{N}m_{k}J_{n_{k}}\left( f,\mathbf{x}_{k},\mathbf{q}_{k}\right)
\geq 0,
\end{equation*}%
holds where $\mathbf{p}_{1}=\left( p_{1,1},...,p_{n_{1},1}\right) ,$\ $%
\mathbf{q}_{k}\mathbf{=}\left( q_{1,k},...,q_{n_{k},k}\right) \mathbf{,}$ $%
\mathbf{x}_{k}=$\ $\left( x_{1,k},...,x_{n_{k},k}\right) ,$ $n_{1}\geq
n_{2}\geq ,...,\geq n_{k}\geq n_{k+1}\geq n_{N},\ k=1,...,N$, and $p_{i,k}$, 
$m_{k},$ $x_{i,k},$ are as denoted in (\ref{2.8}) and (\ref{2.9}), $%
\sum_{i=1}^{n_{1}}p_{i,1}=\sum_{i=1}^{n_{k}}q_{i,k}=1,$ $k=1,...,N$\ and $%
p_{i,1}\geq 0,\ \ i=1,...,n_{1}$, \ $q_{i,k}>0,\ i=1,...,n_{k}$, $m_{k}=%
\underset{1\leq i\leq n_{k}}{\min }\left( \frac{p_{i,k}}{q_{i,k}}\right) ,$ $%
k=1,...,N.$
\end{theorem}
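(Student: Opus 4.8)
The plan is to isolate the one--step identity that underlies the proof of Theorem \ref{Th2} in \cite{A1} and then to telescope it. I would first prove that, for each $k=1,\ldots ,N$, the recursion (\ref{2.8})--(\ref{2.9}) yields a probability vector $\mathbf{p}_{k+1}=\left( p_{1,k+1},\ldots ,p_{n_{k+1},k+1}\right) $ together with the exact splitting
\begin{equation*}
J_{n_{k}}\left( f,\mathbf{x}_{k},\mathbf{p}_{k}\right) =m_{k}J_{n_{k}}\left( f,\mathbf{x}_{k},\mathbf{q}_{k}\right) +J_{n_{k+1}}\left( f,\mathbf{x}_{k+1},\mathbf{p}_{k+1}\right) .
\end{equation*}
Granting this, summing over $k=1,\ldots ,N$ makes every intermediate functional $J_{n_{k}}\left( f,\mathbf{x}_{k},\mathbf{p}_{k}\right) $ cancel telescopically, leaving
\begin{equation*}
J_{n_{1}}\left( f,\mathbf{x}_{1},\mathbf{p}_{1}\right) -\sum_{k=1}^{N}m_{k}J_{n_{k}}\left( f,\mathbf{x}_{k},\mathbf{q}_{k}\right) =J_{n_{N+1}}\left( f,\mathbf{x}_{N+1},\mathbf{p}_{N+1}\right) ,
\end{equation*}
and since $\mathbf{p}_{N+1}$ is a probability vector and $f$ is convex, Jensen's inequality gives $J_{n_{N+1}}\left( f,\mathbf{x}_{N+1},\mathbf{p}_{N+1}\right) \geq 0$, which is exactly the asserted inequality.

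To establish the one--step identity I would verify two things. First, $\mathbf{p}_{k+1}$ is admissible: for every surviving index $p_{i,k+1}=p_{i,k}-m_{k}q_{i,k}\geq 0$ because $m_{k}=\min_{i}\left( p_{i,k}/q_{i,k}\right) $ forces $p_{i,k}\geq m_{k}q_{i,k}$, while the single merged coordinate carries mass $m_{k}\geq 0$; moreover $\sum_{i}p_{i,k+1}=\left( 1-m_{k}\right) +m_{k}=1$, using that the coordinates attaining the minimum contribute $p_{i,k}-m_{k}q_{i,k}=0$. Second, and this is the crux, the argument of the outer $f$ is invariant under the recursion. Writing $\overline{x}_{k}=\sum_{i=1}^{n_{k}}q_{i,k}x_{i,k}$ for the value placed at the merged coordinate, one computes
\begin{equation*}
\sum_{i}p_{i,k+1}x_{i,k+1}=\sum_{i}\left( p_{i,k}-m_{k}q_{i,k}\right) x_{i,k}+m_{k}\overline{x}_{k}=\sum_{i}p_{i,k}x_{i,k}-m_{k}\overline{x}_{k}+m_{k}\overline{x}_{k}=\sum_{i}p_{i,k}x_{i,k},
\end{equation*}
again absorbing the minimum--attaining indices into the full sum since their increments vanish. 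The same cancellation applied to $\sum_{i}p_{i,k+1}f\left( x_{i,k+1}\right) $ matches the linear part of $J_{n_{k}}\left( f,\mathbf{x}_{k},\mathbf{p}_{k}\right) -m_{k}J_{n_{k}}\left( f,\mathbf{x}_{k},\mathbf{q}_{k}\right) $, and the invariance of the argument makes the two occurrences of $f\left( \sum_{i}p_{i,k}x_{i,k}\right) $ cancel, giving the claimed identity.

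The point that needs the most care, and where Theorem \ref{Th8} differs from Theorem \ref{Th6}, is the index bookkeeping hidden in the relabelling $i\mapsto j_{i}$ of (\ref{2.8})--(\ref{2.9}). At step $k$ one deletes all $t_{k}$ coordinates attaining $m_{k}$ and inserts a \emph{single} new coordinate of weight $m_{k}$ at the point $\overline{x}_{k}$, so $n_{k+1}=n_{k}-t_{k}+1\leq n_{k}$, which is where the monotonicity $n_{1}\geq n_{2}\geq \cdots \geq n_{N}$ comes from. The main obstacle is therefore to check that this compression is value--preserving: since all deleted coordinates already shared the common value $\overline{x}_{k}$ and their total peeled mass equals $m_{k}$, replacing them by one coordinate changes neither $\sum_{i}p_{i,k+1}x_{i,k+1}$ nor $\sum_{i}p_{i,k+1}f\left( x_{i,k+1}\right) $ relative to the non--collapsed construction of Theorem \ref{Th6}. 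One should also note $\overline{x}_{k}\in \left[ a,b\right] $, being a convex combination of the $x_{i,k}$, so that $f\left( \overline{x}_{k}\right) $ is defined. Once the compression is recognised as inert, the argument is verbatim that of Theorem \ref{Th2}, whence only the outline is required.
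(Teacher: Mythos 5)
Your proposal is correct and follows essentially the same route as the paper: the one--step identity $J_{n_{k}}\left( f,\mathbf{x}_{k},\mathbf{p}_{k}\right) -m_{k}J_{n_{k}}\left( f,\mathbf{x}_{k},\mathbf{q}_{k}\right) =J_{n_{k+1}}\left( f,\mathbf{x}_{k+1},\mathbf{p}_{k+1}\right) $ is exactly the paper's identity (\ref{2.10}) with the invariant constant $f\left( \sum_{i}p_{i,1}x_{i,1}\right) $ subtracted at each step rather than only at the end, and the telescoping plus a final application of Jensen's inequality to the probability vector $\mathbf{p}_{N+1}$ reproduces (\ref{2.13}). Your additional checks (nonnegativity and normalization of $\mathbf{p}_{k+1}$, preservation of the barycenter, and the bookkeeping $n_{k+1}=n_{k}-t_{k}+1\leq n_{k}$ for the collapsed coordinate) are precisely the facts recorded in (\ref{2.11}) and (\ref{2.12}).
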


\begin{proof}
\bigskip According to (\ref{2.8}) and (\ref{2.9})%
\begin{eqnarray}
&&\sum_{i=1}^{n_{1}}p_{i,1}f\left( x_{i,1}\right) -m_{1}\left(
\sum_{i=1}^{n_{1}}q_{i,1}f\left( x_{i,1}\right) -f\left(
\sum_{i=1}^{n_{1}}q_{i,1}x_{i,1}\right) \right)  \label{2.10} \\
&=&\sum_{i=1}^{n_{1}}\left( p_{i,1}-m_{1}q_{i,1}\right) f\left(
x_{i,1}\right) +m_{1}f\left( \sum_{i=1}^{n_{1}}q_{i,1}x_{i,1}\right)  \notag
\\
&=&\sum_{i=1}^{n_{2}-1}\left( p_{i,1}-m_{1}q_{i,1}\right) f\left(
x_{i,1}\right) +m_{1}f\left( \sum_{i=1}^{n_{1}}q_{i,1}x_{i,1}\right) , 
\notag \\
&=&\sum_{i=1}^{n_{2}}p_{i,2}f\left( x_{i,2}\right) ,  \notag
\end{eqnarray}%
The expression $\sum_{i=1}^{n_{2}-1}\left( p_{i,1}-m_{1}q_{i,1}\right)
f\left( x_{i,1}\right) $ includes only the terms $\left(
p_{i,1}-m_{1}q_{i,1}\right) >0$.

Therefore, from (\ref{2.10}) we get that \ 
\begin{eqnarray*}
&&\sum_{i=1}^{n_{1}}p_{i,1}f\left( x_{i,1}\right) -m_{1}\left(
\sum_{i=1}^{n_{1}}q_{i,1}f\left( x_{i,1}\right) -f\left(
\sum_{i=1}^{n_{1}}q_{i,1}x_{i,1}\right) \right) \\
&=&\sum_{i=1}^{n_{2}}p_{i,2}f\left( x_{i,2}\right) ,
\end{eqnarray*}%
and it is easy to verify that 
\begin{equation}
\sum_{i=1}^{n_{2}}p_{i,2}=1,\quad p_{i,2}\geq 0,\quad
\sum_{i=1}^{n_{2}}p_{i,2}x_{i,2}=\sum_{i=1}^{n_{1}}p_{i,1}x_{i,1}
\label{2.11}
\end{equation}%
\ \ \ \ \ \ \ Hence, 
\begin{eqnarray*}
&&\sum_{i=1}^{n_{1}}p_{i,1}f\left( x_{i,1}\right) -\sum_{k=1}^{N}m_{k}\left(
\sum_{i=1}^{n_{k}}q_{i,k}f\left( x_{i,k}\right) -f\left(
\sum_{i=1}^{n_{k}}q_{i,k}x_{i,k}\right) \right) \\
&=&\sum_{i=1}^{n_{2}}p_{i,2}f\left( x_{i,2}\right)
-\sum_{k=2}^{N}m_{k}\left( \sum_{i=1}^{n_{k}}q_{i,k}f\left( x_{i,k}\right)
-f\left( \sum_{i=1}^{n_{k}}q_{i,k}x_{i,k}\right) \right) \\
&=&\qquad \qquad \cdot \qquad \qquad \cdot \qquad \qquad \cdot \qquad \qquad
\qquad \qquad \cdot \\
&=&\sum_{i=1}^{n_{N}}p_{i,N}f\left( x_{i,N}\right) -m_{N}\left(
\sum_{i=1}^{n_{N}}q_{i,N}f\left( x_{i,N}\right) -f\left(
\sum_{i=1}^{n_{N}}q_{i,N}x_{i,N}\right) \right) \\
&=&\sum_{i=1}^{n_{N}}\left( p_{i,N}-m_{N}q_{i,N}\right) f\left(
x_{i,N}\right) +m_{N}f\left( \sum_{i=1}^{n_{N}}q_{i,N}x_{i,N}\right) \\
&=&\sum_{i=1}^{n_{N+1}-1}\left( p_{i,N}-m_{N}q_{i,N}\right) f\left(
x_{i,N}\right) +m_{N}f\left( \sum_{i=1}^{n_{N}}q_{i,N}x_{i,N}\right) \\
&=&\sum_{i=1}^{n_{N+1}}p_{i,N+1}f\left( x_{i,N+1}\right) .
\end{eqnarray*}

Also, it is clear, the same way as in (\ref{2.11}) that%
\begin{eqnarray}
\sum_{i=1}^{n_{k}}p_{i,k} &=&1,\quad p_{i,k}\geq 0,\quad i=1,...,n_{k},\quad
k=1,...,N+1,  \label{2.12} \\
\sum_{i=1}^{n_{1}}p_{i,1}x_{i,1} &=&\sum_{i=1}^{n_{k}}p_{i,k}x_{i,k},\quad
k=1,...,N+1.  \notag
\end{eqnarray}%
Therefore, as a result we have to show in order to prove the theorem that%
\begin{eqnarray}
&&\sum_{i=1}^{n_{1}}p_{i,1}f\left( x_{i,1}\right) -\sum_{k=1}^{N}m_{k}\left(
\sum_{i=1}^{n_{k}}q_{i,k}f\left( x_{i,k}\right) -f\left(
\sum_{i=1}^{n_{k}}q_{i,k}x_{i,k}\right) \right)  \label{2.13} \\
&=&\sum_{i=1}^{n_{N+1}}p_{i,N+1}f\left( x_{i,N+1}\right) \geq f\left(
\sum_{i=1}^{n_{1}}p_{i,1}x_{i,1}\right) =f\left(
\sum_{i=1}^{n_{N+1}}p_{i,N+1}x_{i,N+1}\right) ,  \notag
\end{eqnarray}%
holds, and inequality (\ref{2.13}) is satisfied because of the convexity of $%
f$, (\ref{2.12}) and $\sum_{i=1}^{n_{N+1}}p_{i,N+1}\left( x_{i,N+1}\right)
=\sum_{i=1}^{n_{1}}p_{i,1}x_{i,1}$.

The proof of the theorem is complete.
\end{proof}

In Theorem \ref{Th9}, similarly to Theorem \ref{Th8}, where we extend
Theorem \ref{Th2} and the right handside of the inequality in Theorem \ref%
{Th1}, we extend now Theorem \ref{Th3} and the left hand-side inequality in
Theorem \ref{Th1}.

The difference between Theorem \ref{Th7} and Theorem \ref{Th9} is that in
Theorem \ref{Th9} we eliminate all the terms that satisfy $%
p_{j_{i},k}-M_{k}q_{j_{i},k}=0$\ and add instead only one term equal to $%
m_{k}f\left( \sum_{i=1}^{n_{k}}q_{i,k}x_{i,k}\right) $. Therefore, $\mathbf{q%
}_{k}=\left( q_{1,k},...,q_{n_{k},k}\right) ,$ $\mathbf{x}_{k}=\left(
x_{1,k},...,x_{n_{k},k}\right) ,$ $k=1,...,N$\ satisfy $n_{1}\geq n_{2}\geq
,...,\geq n_{N}$.\ Moreover, we prove that $n_{k}=n_{2},\ k=2,...,N.$

We denote 
\begin{eqnarray}
\mathbf{p}_{1} &=&\left( p_{1,1},...,p_{n_{1},1}\right) ,\quad \mathbf{q}_{1}%
\mathbf{=}\left( q_{1,1},...,q_{n_{1},1}\right) ,  \label{2.14} \\
\text{where\quad }p_{i,1} &\geq &0,\quad q_{i,1}>0,\quad i=1,...,n_{1},\quad
\sum_{i=1}^{n_{1}}p_{i,1}=\sum_{i=1}^{n_{1}}q_{i,1}=1,  \notag \\
M_{1} &=&\underset{1\leq i\leq n_{1}}{Max}\left( \frac{p_{i,1}}{q_{i,1}}%
\right) =\frac{p_{j_{1},1}}{q_{j_{1},1}},  \notag
\end{eqnarray}%
for a fixed specific integer $j_{1}$ which satisfies $M_{1}$.

We also define recoursively 
\begin{eqnarray}
&&\quad  \label{2.15} \\
\mathbf{x}_{k} &=&\left( x_{1,k},...,x_{n_{k},k}\right) ,\quad k=1,...,N 
\notag \\
p_{i,k} &=&p_{i,k-1}-M_{k-1}q_{i,k-1},\ x_{i,k}=x_{i,k-1},\ \text{when}\
M_{k-1}\neq \frac{p_{i,k-1}}{q_{i,k-1}},\ k=2,...,N  \notag \\
p_{j_{k},k} &=&M_{k-1},\
x_{j_{k},k}=\sum_{i=1}^{n_{k-1}}q_{i,k-1}x_{i,k-1},\ \text{when}\ M_{k-1}=%
\frac{p_{j_{k-1},k-1}}{q_{j_{k-1},k-1}},  \notag \\
M_{k} &=&\underset{1\leq i\leq n_{k}}{Max}\left( \frac{p_{i,k}}{q_{i,k}}%
\right) =\frac{p_{j_{k},k}}{q_{j_{k},k}},\qquad k=1,...,N,  \notag
\end{eqnarray}%
where each $j_{k},$is a specific index for which $M_{k}$ holds, $k=1,...,N$.

As in Theorem \ref{Th8}, because of the similar techniques to those in \cite%
{A1} we show in Theorem \ref{Th9} only an outline of the proof.

With the notations, definitions and conditions in (\ref{2.14}) and (\ref%
{2.15}) we get:

\begin{theorem}
\bigskip \label{Th9} Let $f:\left[ a,b\right) \rightarrow 
\mathbb{R}
,$ $a\leq b<\infty ,$ be a convex function, and let (\ref{2.14}) and (\ref%
{2.15}) hold. Then, for every integer $N$%
\begin{equation}
J_{n_{1}}\left( f,\mathbf{x}_{1,}\mathbf{p}_{1}\right)
-\sum_{k=1}^{N}M_{k}J_{k}\left( f,\mathbf{x}_{k},\mathbf{q}_{k}\right) \leq
0,  \label{2.16}
\end{equation}

\begin{equation}
M_{k}=\frac{p_{j_{1},1}}{\Pi _{m=1}^{k}\left( q_{j_{1},m}\right) },Ck=1,...,N
\label{2.17}
\end{equation}%
and 
\begin{equation}
n_{1}\geq n_{2}=n_{k},=n_{N},\quad k=2,...,N  \label{2.18}
\end{equation}%
hold, where $j_{1}$ is a fixed specific integer for which $M_{1}=\underset{%
1\leq i\leq n_{1}}{\max }\left( \frac{p_{i,1}}{q_{i,1}}\right) =\frac{%
p_{j_{1},1}}{q_{j_{1},1}}$ is satisfied.
\end{theorem}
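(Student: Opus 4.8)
The plan is to follow the telescoping scheme used for Theorem~\ref{Th8}, but to keep careful track of the \emph{signs} of the recursively produced weights, because in the $\max$ setting the single inequality that ends the argument will be a \emph{reverse} (converse) Jensen inequality rather than the ordinary one. First I would record the one-step identity: for each $k$, expanding $M_kJ_{n_k}(f,\mathbf{x}_k,\mathbf{q}_k)$ and using $\sum_i q_{i,k}=1$ gives
\begin{equation*}
\sum_{i=1}^{n_k} p_{i,k} f(x_{i,k}) - M_k\Bigl(\sum_{i=1}^{n_k} q_{i,k} f(x_{i,k}) - f\bigl(\sum\nolimits_{i=1}^{n_k} q_{i,k} x_{i,k}\bigr)\Bigr) = \sum_{i=1}^{n_k}\bigl(p_{i,k}-M_k q_{i,k}\bigr) f(x_{i,k}) + M_k f\bigl(\sum\nolimits_{i=1}^{n_k} q_{i,k} x_{i,k}\bigr).
\end{equation*}
Since $M_k=p_{j_k,k}/q_{j_k,k}$, the maximizing term $(p_{j_k,k}-M_kq_{j_k,k})f(x_{j_k,k})$ vanishes, so only the non-maximizing indices survive; together with the new node $x_{j_{k+1},k+1}=\sum_{i=1}^{n_k}q_{i,k}x_{i,k}$ carrying weight $p_{j_{k+1},k+1}=M_k$, the definitions~(\ref{2.15}) turn the right-hand side into $\sum_{i=1}^{n_{k+1}}p_{i,k+1}f(x_{i,k+1})$. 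This step is purely algebraic and uses no convexity.

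Next I would verify the bookkeeping, exactly as in (\ref{2.11})--(\ref{2.12}): using $M_kq_{j_k,k}=p_{j_k,k}$ and $\sum_i q_{i,k}=1$ one checks $\sum_{i=1}^{n_{k+1}}p_{i,k+1}=1$ and that the barycenter is preserved, $\sum_{i=1}^{n_{k+1}}p_{i,k+1}x_{i,k+1}=\sum_{i=1}^{n_k}p_{i,k}x_{i,k}=\sum_{i=1}^{n_1}p_{i,1}x_{i,1}$. The decisive new feature, absent from Theorem~\ref{Th8}, is the sign pattern: because $M_k=\max_i p_{i,k}/q_{i,k}$, every surviving weight $p_{i,k+1}=p_{i,k}-M_kq_{i,k}$ is $\le 0$, while the single fresh weight equals $M_k\ge 1$ (note $M_k\ge (\sum_i p_{i,k})/(\sum_i q_{i,k})=1$). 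Hence for every $k\ge 2$ the configuration $\mathbf{p}_k$ has exactly one strictly positive entry, namely the most recently created node, and all others $\le 0$; consequently the unique maximizer of $p_{i,k}/q_{i,k}$ is that positive entry, which is then eliminated and replaced by one fresh node at the next step. This is precisely why the count stabilizes, yielding (\ref{2.18}), and why $M_k=M_{k-1}/q_{j_k,k}$, which unwinds to the product formula~(\ref{2.17}).

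Summing the one-step identity over $k=1,\dots,N$ telescopes (each $M_kJ_{n_k}=T_k-T_{k+1}$ with $T_k:=\sum_i p_{i,k}f(x_{i,k})$), and after subtracting $f(\sum_i p_{i,1}x_{i,1})$ and invoking the preserved barycenter to rewrite $f(\sum_i p_{i,N+1}x_{i,N+1})=f(\sum_i p_{i,1}x_{i,1})$, one obtains
\begin{equation*}
J_{n_1}(f,\mathbf{x}_1,\mathbf{p}_1) - \sum_{k=1}^{N} M_k J_{n_k}(f,\mathbf{x}_k,\mathbf{q}_k) = \sum_{i=1}^{n_{N+1}} p_{i,N+1} f(x_{i,N+1}) - f\bigl(\sum\nolimits_{i=1}^{n_1} p_{i,1} x_{i,1}\bigr).
\end{equation*}
It remains to show the right-hand side is $\le 0$, i.e. $\sum_i p_{i,N+1}f(x_{i,N+1})\le f(\sum_i p_{i,N+1}x_{i,N+1})$.

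I expect this last inequality to be the main obstacle, and it is exactly where the $\max$ case parts ways with Theorem~\ref{Th8}: the weights $p_{i,N+1}$ do \emph{not} form a probability vector but a signed one, with a single entry $\lambda:=M_N\ge 1$ and all others $\le 0$, summing to $1$. To finish I would establish the converse Jensen inequality for such a configuration. Writing $z=\sum_i p_{i,N+1}x_{i,N+1}$ (which lies in $[a,b)$, being the original barycenter) and $i_0$ for the positive index, one verifies that
\begin{equation*}
x_{i_0,N+1}=\frac{1}{\lambda}\,z+\sum_{i\ne i_0}\Bigl(-\frac{p_{i,N+1}}{\lambda}\Bigr)x_{i,N+1}
\end{equation*}
is a genuine convex combination, since the coefficients $1/\lambda\in(0,1]$ and $-p_{i,N+1}/\lambda\ge 0$ are nonnegative and sum to $1$, of points of $[a,b)$. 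Applying ordinary convexity to this combination and multiplying through by $\lambda>0$ gives $\sum_i p_{i,N+1}f(x_{i,N+1})\le f(z)$, the desired reverse inequality, which completes the proof. The only subtlety needing care is that each constructed node $\sum_i q_{i,k}x_{i,k}$ and the barycenter $z$ remain in the domain $[a,b)$; this holds because a convex combination of points of $[a,b)$ stays in $[a,b)$ when $b<\infty$, so that $f$ is defined and convex at every point used.
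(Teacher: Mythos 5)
Your proof is correct and follows essentially the same route as the paper: the same telescoping identity, the same sign analysis showing that from step $2$ onward only the $j_1$ entry is positive (which yields (\ref{2.17}) and (\ref{2.18})), and the same final convexity step. Your "reverse Jensen for a signed weight vector" at level $N+1$ is, after dividing by $\lambda=M_N$, exactly the convex combination $\sum_i q_{i,N}x_{i,N}=\frac{1}{M_N}\sum_i p_{i,N}x_{i,N}+\sum_{i\neq j_1}\bigl(q_{i,N}-\frac{p_{i,N}}{M_N}\bigr)x_{i,N}$ to which the paper applies ordinary convexity.
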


\begin{proof}
As $j_{1}$ is a specific integer for which $M_{1}=\underset{1\leq i\leq n_{1}%
}{Max}\left( \frac{p_{i,1}}{q_{i},_{1}}\right) =\frac{p_{j_{1},1}}{%
q_{j_{1},1}},$\ it is easy to see that%
\begin{equation*}
M_{k-1}=\underset{1\leq i\leq n_{k-1}}{Max}\frac{p_{i,k-1}}{q_{i,k-1}}=\frac{%
p_{j_{1},1}}{\Pi _{m=1}^{k-1}q_{j_{1}},m},\quad k=2,...,N+1
\end{equation*}%
because the only positive $p_{i,k},$ $k=2,...$ is $p_{j_{1},k}$, as 
\begin{equation*}
\text{when }\frac{p_{i,1}}{q_{i,1}}<\frac{p_{j_{1},1}}{q_{j_{1},1}},\quad
i\neq j_{1},\quad \text{then }p_{i,2}<0,\quad x_{i,2}=x_{i,1},
\end{equation*}

\ 
\begin{equation*}
\text{when }i=j_{1},\quad \text{then }p_{i,2}\ >0,\quad
\end{equation*}%
and \ 
\begin{equation*}
\sum_{i=1}^{n_{2}}p_{i,2}x_{i,2}=1,\quad
x_{j_{1},2}=\sum_{i=1}^{n_{1}}q_{i,1}x_{i,1.}.
\end{equation*}%
Hence, also for $k=2,...,N$\ the only positive $p_{i,k},$\ $i=2,...,n_{k}$
is $p_{j_{1},k}$\ where $j_{1}$ is the fixed integer that satisfies $M_{1}=%
\underset{1\leq i\leq n_{1}}{\max }\left( \frac{p_{i,1}}{q_{i,1}}\right) =%
\frac{p_{j_{1},1}}{q_{j_{1},1}},$ and therefore we can replace in (\ref{2.15}%
) $j_{k}$ with $j_{1}$, which means that (\ref{2.17}) holds. Also, as the
only positive $p_{i,k}$, $k=2,...,N$\ is $p_{j_{k},k}$,\ it is obvious that $%
n_{k}=n_{2}$, $k=2,...,N$.

In order to complete the proof of the theorem, we proceed now with\ proving (%
\ref{2.16}):

In a similar way as in Theorem \ref{Th7}\ together with (\ref{2.18}) we get

\begin{eqnarray}
&&  \label{2.19} \\
&&\sum_{i=1}^{n_{1}}p_{i,1}f\left( x_{i,1}\right) -\sum_{k=1}^{N}M_{k}\left(
\sum_{i=1}^{n_{k}}q_{i,k}f\left( x_{i,k}\right) -f\left(
\sum_{i=1}^{n_{k}}q_{i,k}x_{i,k}\right) \right)  \notag \\
&=&\sum_{i=1}^{n_{2}}p_{i,2}f\left( x_{i,2}\right)
-\sum_{k=2}^{N}M_{k}\left( \sum_{i=1}^{n_{k}=n_{2}}q_{i,k}f\left(
x_{i,k}\right) -f\left( \sum_{i=1}^{n_{k}=n_{2}}q_{i,k}x_{i,k}\right) \right)
\notag \\
&=&\qquad \cdot \qquad \qquad \qquad \cdot \qquad \qquad \cdot \qquad \qquad
\qquad \qquad \cdot  \notag \\
&=&\sum_{i=1}^{n_{N}=n_{2}}p_{i,N}f\left( x_{i,N}\right) -M_{N}\left(
\sum_{i=1}^{n_{N}=n_{2}}q_{i,N}f\left( x_{i,N}\right) -f\left(
\sum_{i=1}^{n_{N}=n_{2}}q_{i,N}x_{i,N}\right) \right)  \notag \\
&=&\sum_{i=1}^{n_{N}=n_{2}}\left( p_{i,N}-M_{N}q_{i}\right) f\left(
x_{i,N}\right) +M_{N}f\left( \sum_{i=1}^{n_{N}=n_{2}}q_{i,N}x_{i,N}\right) 
\notag \\
&=&\sum_{i=1}^{n_{N+1}=n_{2}}p_{i,N+1}f\left( x_{i,N+1}\right)  \notag
\end{eqnarray}

Also it is clear that

\begin{eqnarray*}
p_{i,k} &\leq &0,\quad i=1,...,n_{k},\quad i\neq j_{1},\quad
p_{j_{1},k}>0,\quad k=2,...,N+1, \\
\sum_{i=1}^{n_{k+1}=n_{2}}p_{i,k+1} &=&1,\quad
\sum_{i=1}^{n_{1}}p_{i,1}x_{i,1}=\sum_{i=1}^{n_{k}=n_{2}}p_{i,k}x_{i,k},%
\quad k=2,...,N+1.
\end{eqnarray*}

From (\ref{2.19}) it follows that in order to prove (\ref{2.16}) we have to
show that%
\begin{eqnarray*}
&&\sum_{i=1}^{n_{1}}p_{i,1}f\left( x_{i,1}\right) -M_{1}\left(
\sum_{i=1}^{n_{1}}q_{i,1}f\left( x_{i,1}\right) -f\left(
\sum_{i=1}^{n_{1}}q_{i,1}x_{i,1}\right) \right) \\
&&-\sum_{k=2}^{N}M_{k}\left( \sum_{i=1}^{n_{k}=n_{2}}q_{i,k}f\left(
x_{i,k}\right) -f\left( \sum_{i=1}^{n_{k}=n_{2}}q_{i,k}x_{i,k}\right) \right)
\\
&=&\sum_{i=1}^{n_{N+1}=n_{2}}p_{i,N+1}f\left( x_{i,N+1}\right) \leq f\left(
\sum_{i=1}^{n_{1}}p_{i,1}x_{i,1}\right) ,
\end{eqnarray*}%
that is, we have to show that%
\begin{equation*}
\sum_{i=1}^{n_{N}=n_{2}}\left( p_{i,N}-M_{N}q_{i,N}\right) f\left(
x_{i,N}\right) +M_{N}f\left( \sum_{i=1}^{n_{N}=n_{2}}q_{i,N}x_{i,N}\right)
\leq f\left( \sum_{i=1}^{n_{1}}p_{i,1}x_{i,1}\right) .
\end{equation*}%
In other words, we have to show that 
\begin{eqnarray*}
&&\frac{1}{M_{N}}f\left( \sum_{i=1}^{n_{N}=n_{2}}p_{i,N}x_{i,N}\right)
+\sum_{i=1,\ i\neq j_{1}}^{n_{N}=n_{2}}\left( q_{i,N}-\frac{p_{i,N}}{M_{N}}%
\right) f\left( x_{i,N}\right) \\
&\geq &f\left( \sum_{i=1}^{n_{N+1}=n_{2}}q_{i,N}x_{i,N.}\right)
\end{eqnarray*}%
holds.

The last inequality follows from the convexity of $f$ because $q_{i,N}-\frac{%
p_{i,N}}{M_{N}}\geq 0\ ,$ $i=1,...,n_{N},$ $i\neq j_{1}$\ \ and $\frac{1}{%
M_{N}}>0$,\ therefore, the inequality (\ref{2.16}) holds.

The proof of the theorem is complete.
\end{proof}

\begin{remark}
\label{Rem1} If in Theorem \ref{Th9} $M_{1}=\underset{1\leq i\leq n_{1}}{Max}%
\left( \frac{p_{i,1}}{q_{i,1}}\right) =\frac{p_{j_{1},1}}{q_{j_{1},1}}>\frac{%
p_{i,1}}{q_{i,1}},$ $i=1,...,n,$ $i\neq j_{1}$, then Theorem \ref{Th9} is
the same as Theorem \ref{Th7}.
\end{remark}

\textbf{Concluding Remarks.} \textit{In Theorem \ref{Th6} we eliminate all }$%
s_{k-1}$\textit{\ terms for which }$\left( p_{i,k-1}-m_{k-1}q_{i,k-1}\right)
f\left( x_{i,k-1}\right) =0$\textit{\ and replace all of these }$s_{k-1}$%
\textit{\ with terms }$\frac{m_{k-1}}{s_{k-1}}f\left(
\sum_{i=1}^{n}q_{i,k-1}x_{i,k-1}\right) $\textit{. On the other hand, in
Theorem \ref{Th8} we eliminate all the }$s_{k-1}$\textit{\ terms for which }$%
\left( p_{i,k-1}-m_{k-1}q_{i,k-1}\right) =0$\textit{\ and replace only one
of them with }$m_{k-1}f\left( \sum_{i=1}^{n}q_{i,k-1}x_{i,k-1}\right) $%
\textit{.}

\textit{From theorems \ref{Th6} and \ref{Th8} it is easy to realize that we
can prove similarly to these two theorems a theorem in which we eliminate
all }$s_{k-1}$\textit{\ terms for which }$\left(
p_{i,k-1}-m_{k-1}q_{i,k-1}\right) =0$\textit{\ and replace }$r_{k-1}$\textit{%
\ terms }$1\leq r_{k-1}\leq s_{k-1}$\textit{\ and get similar results. The
same can be done as a consequence of what is done in theorems \ref{Th7} and %
\ref{Th9}.}

\bigskip


\begin{thebibliography}{9}
\bibitem{A1} S. Abramovich, Extended Normalized Jensen Functional, related
to convexity, 1 -quasiconvexity and superquadracity, JMI Vol. 12 no.3,
(2018), 753-764.

\bibitem{D} S. S. Dragomir, "Bounds of the normalised Jensen functional",
Bull. Austral. Math. Soc., 74 (2006), 471-478.

\bibitem{S} M. Sababheh, "Improved Jensen's inequality", Math. Inequal.
Appl., 20 (2017), no2, 389-403.

\bibitem{SDB} Y. Sayyarri, M. Dehghanian, H. Barsam, "New bounds for
Jensen-Dragomir functionalwith applications in analysis", J. Indian Math.
Soc., (N.S.) 90 (2023, no. 1-2, 175-185.

\bibitem{TTD} Pham Thanh The, Doan Thi Thuy Van, Duong Quoc Huy, " Improved
Jensen-Dragomir type inequalities and applications", Preprint.
\end{thebibliography}
\end{document}